\renewenvironment{thebibliography}[1]{
  \begin{oldthebibliography}{#1}
    \setlength{\itemsep}{0.6em}
    \setlength{\parskip}{0em}
}
{
  \end{oldthebibliography}
}
\theoremstyle{definition}
\newtheorem{Algorithm}{Algorithm}
\numberwithin{equation}{section}
\newtheorem{theorem}{Theorem}[section]
\newtheorem{lemma}[theorem]{Lemma}
\newtheorem{corollary}[theorem]{Corollary}
\newtheorem{proposition}[theorem]{Proposition}
\theoremstyle{definition}
\newtheorem{definition}[theorem]{Definition}
\newenvironment{example}
{\pushQED{\qed}\examplex}
{\popQED\endexamplex}
\newenvironment{remark}
{\pushQED{\qed}\remarkx}
{\popQED\endremarkx}
\newtheoremstyle{citing}
{}
{}
{\itshape}
{}
{\bfseries}
{\textbf{.}}
{.5em}
{\thmnote{#3}}
{\theoremstyle{citing}
}
\newcommand{\NN}{\mathbb{N}}
\newcommand{\RR}{\mathbb{R}}
\newcommand{\QQ}{\mathbb{Q}}
\newcommand{\KK}{\mathbb{K}}
\DeclareMathOperator{\Spec}{Spec}
\DeclareMathOperator{\Span}{span}
\DeclareMathOperator{\ord}{ord}
\DeclareMathOperator{\res}{res}
\newcommand{\FF}{\mathbb{F}}
\newcommand{\cA}{\mathcal{A}}
\newcommand{\tr}{\mathrm{trdeg}}
\newcommand{\cF}{\mathcal{F}}
\title{D-Algebraic Functions}
\author{\normalsize Rida Ait El Manssour, Anna-Laura Sattelberger, Bertrand Teguia Tabuguia}
\date{}
\begin{document}

\DefineParaStyle{Maple Bullet Item}
\DefineParaStyle{Maple Heading 1}
\DefineParaStyle{Maple Warning}
\DefineParaStyle{Maple Heading 4}
\DefineParaStyle{Maple Heading 2}
\DefineParaStyle{Maple Heading 3}
\DefineParaStyle{Maple Dash Item}
\DefineParaStyle{Maple Error}
\DefineParaStyle{Maple Title}
\DefineParaStyle{Maple Ordered List 1}
\DefineParaStyle{Maple Text Output}
\DefineParaStyle{Maple Ordered List 2}
\DefineParaStyle{Maple Ordered List 3}
\DefineParaStyle{Maple Normal}
\DefineParaStyle{Maple Ordered List 4}
\DefineParaStyle{Maple Ordered List 5}
\DefineCharStyle{Maple 2D Output}
\DefineCharStyle{Maple 2D Input}
\DefineCharStyle{Maple Maple Input}
\DefineCharStyle{Maple 2D Math}
\DefineCharStyle{Maple Hyperlink}

\setlength\parindent{14pt}


\thispagestyle{empty}
\maketitle 

\begin{abstract}
Differentially-algebraic (D-algebraic) functions are solutions of polynomial equations in the function, its derivatives, and the independent variables. We revisit closure properties of these functions by providing constructive proofs. We present algorithms to compute algebraic differential equations for compositions and arithmetic manipulations of univariate \mbox{D-algebraic} functions and derive bounds for the order of the resulting differential equations. We apply our methods to examples in the sciences. 
\end{abstract}

\section{Introduction}\label{sec:intro}
The Weyl algebra encodes linear differential operators with polynomial coefficients, such as the operator $P=\partial^2-x$ arising from Airy's differential equation
\vspace*{-1mm}
\begin{align}\label{weierstrasseq}
f''(x)-xf(x) \,=\, 0 \, .
\end{align}
\vspace*{-6mm}

\noindent Solutions of such differential equations are called {\em holonomic} functions. Differential algebra investigates {\em polynomials} in a differential indeterminate $y$ and its derivatives with coefficients in a differential ring. Those polynomials are called {\em differential polynomials} and their associated differential equations are commonly referred to as {\em algebraic differential equations} (ADEs).  For instance, the differential polynomial $p=(y')^2-4y^3-g_2y-g_3,$ where $g_2$ and~$g_3$ are complex constants, encodes the differential equation 
\begin{align}
\left(y'(x)\right)^2 \,=\, 4\left(y(x)\right)^3 + g_2y(x) + g_3 \, ,
\end{align} 
of which the Weierstrass elliptic function $\wp$ (see \cite{reinhardt2009weierstrass, jones1987complex}) is a solution of. Among many more fields of application, ADEs naturally arise in structural identifiability~\cite{DGHP}.
Functions which are zeros of differential polynomials---or, equivalently, solutions of the corresponding ADEs---are called {\em D-algebraic functions}. This class of functions arises in a natural way from the study of holonomic functions: for instance, the reciprocal of a holonomic function is in general not holonomic, but it is \mbox{D-algebraic}. The present article can hence be located at the interim of the theory of $D$-ideals and differential algebra. For introductions to those fields, we refer our readers to \cite{SatStu19,SST00} and \cite{Ritt, kaplansky1976introduction}, respectively. 
Although they are present at several places in the literature (see \cite{rubel1983some,sibuya1981arithmetic,rubel1992some,katriel2003solution}), it is difficult to find exhaustive expositions about D-algebraic functions. This could be explained by the challenging computational complexity observable from \cite{hebisch2011extended}, which is detrimental for applications. One attempt of computer algebra is to find subclasses of D-algebraic functions that offer both a mathematical structure and efficient algorithms for the corresponding arithmetic. Some relevant subclasses are holonomic functions, which are also called ``D-finite functions'' (see \cite{HolFun, stanley1980differentiably}), DD-finite functions that satisfy differential equations with D-finite coefficients \cite{jimenez2019computable}, or 
functions that satisfy ADEs of degree at most~$2$ \cite{TeguiaDelta2, TBguessing}, called ``$\delta_2$-finite functions'' therein. While the rich theory of $D$-modules and holonomic functions covers the degree-one case, similar constructions for ADEs of higher degree are still in an early stage. The idea of bounding the degree of the ADEs is to alleviate the complexity compared to the general~case.
The class of D-algebraic functions has nice closure properties, which is for instance carried out in \cite{raschel2020counting} and \cite{vdH19}. 
\cite{raschel2020counting} illustrates the use of D-algebraic functions in combinatorics for studying generating functions, with an emphasis on quadrant walks via Tutte's invariant method. 
\cite{vdH19} focuses on the zero test problem for D-algebraic functions viewed as formal power series. The earliest appearance of the terminology ``differentially algebraic'' that we are aware of is in Rubel's~work~\cite{rubel1992some}.

In this article, we construct ADEs for compositions and rational functions of D-algebraic functions, taking only the ADEs of the original functions as input. We give bounds for the order of the resulting differential polynomials. Finding an ADE fulfilled by a rational expression in a D-algebraic function $f$, or finding an ADE fulfilled by its antiderivative, are unary operations in the input ADE. The rest of operations---like sums, products, ratios, and compositions---are {\em binary} operations. However, we implemented them for arbitrarily many operands. 
Our algorithms take differential polynomials $p,\, q$ as input. From those, we construct ADEs which rational functions, antiderivatives, and compositions of all solutions of the input ADEs fulfill. Throughout, we tacitly assume that the respective domains of the functions are compatible. We stress that working with the differential polynomials implies working with the set of {\em all} solutions of the corresponding ADEs.

Gr\"{o}bner basis theories exist both for $D$-ideals and for differential ideals. Viewing ADEs as differential polynomials, one can encode an operation between D-algebraic functions by an ideal in a differential polynomial ring. Such an ideal is differentially generated by the given polynomials and a rational expression built from the underlying operation. We use jets to truncate the obtained differential ideal at the desired order before applying elimination theory based on Gr\"{o}bner basis techniques. 
We compare it to a second method that still proceeds with Gr\"{o}bner bases, but uses bounds for the order of the resulting differential polynomial, which we establish in our study.
The main contribution of this article is the development of two strategies to compute differential equations satisfied by rational expressions and compositions of \mbox{D-algebraic} functions. In particular, our algorithms and their implementations are general, reliable, and outperform existing algorithms and software, which often do not contain such computations. For instance, the {\tt find_ioequations} command of the Julia~\cite{Julia} package {\tt StructuralIdentifiability} can be used to derive ADEs with constant coefficients only; moreover, that command requires a user-defined dynamical model, which is not needed in our algorithms for arithmetic operations and compositions. For each operation with \mbox{D-algebraic} functions, we provide a bound for the order of the resulting differential polynomial in Theorems~\ref{thm:boundldf} and \ref{thm:cboundldf} which depends on those of the input ADEs. We implemented our first method in Macaulay2~\cite{M2}, since it is open source and it is one of the systems commonly used by algebraic geometers. The second one is implemented in the Maple~\cite{maple} package {\tt NLDE}~\cite{teguiaoperations}. Our implementations are made available via the MathRepo~\cite{mathrepo}---a repository website hosted by MPI~MiS---at \url{https://mathrepo.mis.mpg.de/DAlgebraicFunctions}.

\pagebreak
{\bf Outline.}
Our article is organized as follows. \Cref{sec:AlgODE} recalls basic concepts about the Weyl algebra as well as differential algebra.
In \Cref{sec:dalgclosure}, we present D-algebraic functions and investigate their closure properties. In \Cref{sec:dalgfunc}, we study the arithmetic of these functions: we construct ADEs for antiderivatives, compositions and rational expressions of D-algebraic functions and study the order of the resulting differential equations. \Cref{sec:algo} presents pseudocode to carry these operations out in practice. We present applications of our results in the study of Feynman integrals and epidemiology. The appendix recalls basic notions about algebraic varieties and jet schemes.
 
\bigskip

{\bf Notation.} By $\NN$, we denote the nonnegative integers, and by $\KK$ a field of characteristic~$0$. Letters $f,g,h$ are reserved for functions, $D$ denotes the Weyl algebra, $P\in D$ is used for linear differential operators. Differential polynomials are denoted by lower case letters $p,q,r,s$. They are polynomials in dependent variables $u,y,z,w$ and the independent variable $x$ (or $x_1,\ldots, x_k$ in the multivariate~case). ``Degree'' means the total degree of polynomials, unless stated otherwise. By a {\em rational expression in $f$}, we will mean a rational function in $x$, $f$, and the derivatives of $f$.

\section{Algebraic aspects of differential equations} \label{sec:AlgODE}
We briefly recall some algebraic aspects of differential equations. We here revisit two classes of differential equations, namely linear and algebraic differential equations with polynomial coefficients. Throughout this section, we describe the case of univariate functions.

\subsection{The Weyl algebra and holonomic functions}\label{sec:HolFun}
The Weyl algebra, denoted $D\coloneqq \KK[x]\langle \partial \rangle,$ is the free $\KK$-algebra generated by $x$ and $\partial$ modulo  the following relation: the commutator of $\partial$ and $x$ is 
\begin{align}\label{eq:Leibniz}
\left[ \partial, x \right] \, = \, \partial x - x \partial  \, = \, 1 \, ,
\end{align}
which encodes Leibniz' rule for taking the derivative of a product of functions in a formal~way. Hence, the Weyl algebra gathers linear differential operators with polynomial coefficients,~i.e.,
\begin{align}
D \, = \, \left\{ \sum_{i=0}^k a_i \partial^i \mid k\in \NN, \ a_i \in \KK[x]  \right\}.
\end{align}
The {\em order} of a differential operator $P=\sum_i a_i \partial^i$ is 
\begin{align} 
\ord (P) \, \coloneqq \, \max \left\{ i \mid a_i \neq 0\right\} \, .
\end{align}
For $P\in D,$ the vanishing locus of the leading polynomial $a_{\ord(P)}$ is the {\em singular locus} of $P.$ 
To a differential operator $P,$ one associates the ODE $P(f) = 0,$ i.e., one looks for functions~$f$ that are annihilated by the differential operator~$P.$

\begin{definition}
A function $f(x)$ is {\em holonomic} if there exists a differential operator $P\in D$ which annihilates $f$.
\end{definition}
Holonomicity of $D$-modules dates back to Bernstein and Kashiwara. Effective computations with holonomic {\em functions} were first studied by Zeilberger \cite{Zei90} for proving identities between special functions automatically. The name ``D-finite'' is justified by the following observation: a function $f$ is holonomic if and only if the $\KK(x)$-vector space spanned by the derivatives of $f$ is finite-dimensional, i.e., if 
\begin{align}\label{eq:Dfinitevs}
\dim_{\KK(x)} \left( \Span_{\KK(x)}\left( \left\{ \partial^k(f) \right\}_{k\in \NN} \right)\right)  \,< \,\infty \,.
\end{align}
In general, a (uni- or multivariate) function is called {\em holonomic} if its annihilating \mbox{$D$-ideal} is holonomic. We refer to \cite{SST00,SatStu19} for an introduction to $D$-modules with a focus on computational aspects and~applications.
Holonomic functions are ubiquitous in the sciences. Examples of holonomic functions include many special functions like error functions, Bessel functions, generalized hypergeometric functions, and linear combinations of elementary functions.
Plenty of computer algebra systems contain libraries for computations around $D$-ideals and holonomic functions, such as the {Mathematica} packages {\tt GeneratingFunctions} and {\tt HolonomicFunctions}~\cite{HolFun}, the package {\tt ore\_algebra} in {SAGE}, the built-in {\tt DEtools-FindODE} in {Maple} which incorporates {\tt HolonomicDE} from the package {\tt FPS}~\cite{FPS}, the package {\tt Dmodules.m2}~\cite{Dmodm2} in {Macaulay2}~\cite{M2}, and the $D$-module libraries~\cite{ABLMS} in {\sc Singular}~\cite{Singular}, just to name a few. The class of holonomic functions is well-behaved: it is closed under addition, multiplication, taking integrals and derivatives, and convolution---whenever defined---and some more operations. However, it is for instance not closed under taking compositions or reciprocals.
In order to decide if the reciprocal of a univariate function is holonomic, one can make use of the following characterization from~\cite{harris1985reciprocals}.

\begin{proposition}\label{prop:reciproc}
Let $f$ be holonomic. Its reciprocal $1/f$ is holonomic iff $f'/f$ is algebraic.
\end{proposition}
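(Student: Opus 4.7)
For the direction ($\Leftarrow$), I would give a concrete construction. Assume $u := f'/f$ is algebraic over $K := \KK(x)$ with minimal polynomial of degree $d$, and set $g := 1/f$, so that $g' = -u g$. Consider the $K$-vector subspace
\[
W \;:=\; K g \;\oplus\; K(u g) \;\oplus\; \cdots \;\oplus\; K(u^{d-1} g).
\]
Because $K(u) = K \oplus Ku \oplus \cdots \oplus Ku^{d-1}$ is closed under $\partial$ (the derivation extends uniquely to the finite algebraic extension $K(u)/K$), one has $u' \in K(u)$. The identity $\partial(u^i g) = i u^{i-1} u' g - u^{i+1} g$, combined with reducing $u^{i+1}$ modulo the minimal polynomial of $u$ when $i+1 \geq d$, shows that $W$ is $\partial$-stable. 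Since $g \in W$ and $\dim_K W \leq d < \infty$, $g$ is holonomic.

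For the direction ($\Rightarrow$), assume $f$ and $g := 1/f$ are both holonomic, with minimal annihilating operators $L_f = \sum_{i=0}^n a_i \partial^i$ and $L_g = \sum_{j=0}^m b_j \partial^j$ (normalized so $a_n = b_m = 1$). Set $u := f'/f$. Iteration of the Leibniz rule produces differential polynomials $Q_k, P_k$ in $u$ defined by $Q_0 = P_0 = 1$, $Q_{k+1} = \partial(Q_k) + u Q_k$, $P_{k+1} = \partial(P_k) - u P_k$, satisfying $f^{(k)} = Q_k(u)\, f$ and $g^{(k)} = P_k(u)\, g$. Dividing $L_f(f) = 0$ by $f$ and $L_g(g) = 0$ by $g$ yields the two Riccati-type algebraic differential equations
\[
S(u) \;:=\; \sum_{i=0}^n a_i\, Q_i(u) \;=\; 0, \qquad R(u) \;:=\; \sum_{j=0}^m b_j\, P_j(u) \;=\; 0,
\]
simultaneously satisfied by $u$. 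My plan is to combine $S$, $R$, and their derivatives to perform a differential elimination of $u^{(n-1)}, u^{(n-2)}, \ldots, u'$ in turn, arriving at a nontrivial polynomial equation $T(u) = 0$ with $T \in K[u]$, which witnesses algebraicity of $u$. The engine driving the elimination is that $Q_k$ carries $u^{(k-1)}$ with leading coefficient $+1$ and pure-$u$ leading term $u^k$, while $P_k$ carries $u^{(k-1)}$ with coefficient $-1$ and pure-$u$ leading term $(-u)^k$; the opposite signs in the corresponding terms of $S$ and $R$ allow controlled cancellation of the top derivative while leaving nontrivial lower-order residuals at each step.

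The main obstacle is ensuring non-triviality of $T$ across the iterated eliminations (the procedure must not collapse to $0 = 0$). A cleaner auxiliary viewpoint that I would use to confirm non-triviality is to consider the finite-dimensional $K$-vector subspace $N := K\text{-span}\{f^{(i)} g^{(j)} : 0 \leq i < n,\, 0 \leq j < m\}$; since $fg = 1$, the generators factor as $f^{(i)} g^{(j)} = Q_i(u) P_j(u)$, so $N$ lies in the differential field generated by $u$, has $\dim_K N \leq nm$, and is $\partial$-stable by Leibniz together with the relations from $L_f, L_g$. Direct combinations place $1 = Q_0 P_0$, $u = Q_1 P_0$, and $u^2 = -Q_1 P_1$ inside $N$, and iteration of $\partial$ together with the Riccati relations $S = R = 0$ propagates further powers of $u$ into $N$. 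Once strictly more than $\dim_K N$ distinct powers of $u$ lie in $N$, they must be $K$-linearly dependent---which is impossible if $u$ were transcendental over $K$---so $u$ is algebraic.
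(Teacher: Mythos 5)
The paper itself does not prove this statement; it quotes it from Harris--Sibuya \cite{harris1985reciprocals}, so your attempt has to stand on its own. Your ($\Leftarrow$) direction does: since $\KK(x)(u)=\KK(x)[u]$ is closed under the unique extension of the derivation (char.~$0$), the space $W=\sum_{i<d}\KK(x)\,u^i g$ is $\partial$-stable, contains $g$, and has dimension at most $d$, so $g$ is holonomic by the criterion \eqref{eq:Dfinitevs}. That half is complete and correct.

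The ($\Rightarrow$) direction, however, is only a plan, and it stops exactly at the hard point, which is the actual content of the Harris--Sibuya theorem. In the elimination route you yourself name the obstacle (non-collapse of the iterated eliminations to $0=0$) and give no argument for it; the sign pattern of $Q_k,P_k$ does not by itself rule out collapse. In the auxiliary route, the unproved claim is that iterating $\partial$ on $N=\Span_{\KK(x)}\{f^{(i)}g^{(j)}\}$ forces more than $\dim N$ powers of $u$ into $N$. This works for $u^2$ and $u^3$ (e.g.\ $Q_2P_0+Q_0P_2=2u^2$ and $Q_1P_2-Q_2P_1=2u^3$), but it already stalls at the next step: the available degree-four elements are $Q_2P_2=u^4-(u')^2$, $-\tfrac12(Q_3P_1+Q_1P_3)=u^4+uu''$, $\tfrac12(Q_4P_0+Q_0P_4)=u^4+4uu''+3(u')^2$, and $\partial(uu')=(u')^2+uu''$, and modulo lower-order elements these all lie in the span of $u^4-(u')^2$ and $(u')^2+uu''$, so they give you $u^4-(u')^2\in N$ but not $u^4\in N$. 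Since $N$ is only a vector space, not an algebra, you cannot multiply known members to isolate $u^4$, and reducing via $L_f,L_g$ only mixes in coefficients without providing a general mechanism. So ``once strictly more than $\dim_K N$ distinct powers of $u$ lie in $N$'' is precisely what must be proven, and nothing in the sketch proves it; if $u$ were transcendental the propagation could simply stop, and no contradiction arises. The known proofs of this direction use genuinely stronger tools (Picard--Vessiot theory and the action of the differential Galois group on the two solution spaces), so the missing step is not a routine verification but the heart of the theorem.
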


\begin{example}\label{ex:recnothol}
Let $f=\cos.$ Clearly, $f$ is holonomic of order~$2,$ since $f''+f=0.$ To that ODE, one associates the differential operator $P=\partial^2+1\in D.$  Its reciprocal $g=1/f$ is not holonomic, since $g$ has infinitely many poles. Indeed, in accordance with \Cref{prop:reciproc}, the function $f'/f=-\sin/\cos=-\tan$ is not algebraic. 
But one can compute a quadratic differential equation for $g$ by hand, or by using {\tt FPS:-QDE} of~\cite{TeguiaDelta2}, for instance. One finds the second-order ODE 
\begin{align} 
g(x)g''(x) - 2 {\left( g'(x)\right) }^2-\left( g(x)\right)^2 \,=\, 0
\end{align}
of degree~$2$.
\end{example} 
\Cref{ex:recnothol} also is an example of a non-holonomic composition $f_1\circ f_2$ of the holonomic functions $f_1(x)=1/x$ and $f_2(x)=\cos(x).$
Those considerations motivate to pass on to differential equations of higher degree.

\subsection{Differential algebra}\label{sec:diffalg}
Differential algebra studies differential equations that express a polynomial relation between a function and its derivatives. In this subsection, we recall basic definitions and properties of differential algebra that will be relevant in the next sections.

Given a field (or ring) $\FF$, a {\it derivation} is a map $\delta\colon \FF \to \FF$ that satisfies $\delta(f+g)=\delta(f) +\delta(g)$ and Leibniz' rule $\delta(f\cdot g)=f \delta(g) + \delta(f)g$ for all $f,g\in\FF$. A {\em differential field (or ring)} is a tuple~$(\FF,\delta).$ For a natural number $j$, we denote by $ f^{(j)} \coloneqq \delta^j(f)$ the $j$-th derivative of $f$, and by $f^{(0)}=f.$ In the multivariate case, one would define a differential polynomial ring with commuting derivations (see \Cref{sec:mDalgApp}), but we here restrict our presentation to the univariate case.
In this article, we will mainly consider the cases $\FF=\KK[x]$ or $\FF=\KK(x)$ together with the derivation $\delta =\partial \coloneqq \sfrac{\partial}{\partial x}.$

In differential algebra, the underlying object of study is the {\it differential polynomial ring} $\FF[y_1^{(\infty)},y_2^{(\infty)},\ldots,y_n^{(\infty)}]$, 
which corresponds to the set of polynomials in the indeterminates $y_i$ and their derivatives $y_i^{(j)},$ $i=1,\ldots,n$, $j\in \NN$. Below, we give a formal definition.

\begin{definition}
Let $(\FF,\delta)$ be a differential field or ring. The \emph{ring of differential polynomials in $y$ over $\FF$}, denoted $\FF[y^{(\infty)}],$ is the following differential ring.
It is the polynomial ring in infinitely many variables $y,y',y'',\ldots$
\begin{align}
\FF[y^{(\infty)}]\, \coloneqq \, \FF[y, y', y'', y^{(3)}, \ldots]
\end{align}
together with the derivation $\delta(y^{(j)}) \coloneqq y^{(j + 1)}$, extending the derivation from $\FF$. 
\end{definition}

In this setting, $y$ is called the {\em differential indeterminate}. The ring of differential polynomials in several differential indeterminates $y_1,\ldots, y_n$ is defined by iterating this construction. 

The {\em order} of a non-zero differential polynomial $p\in\KK [y^{(\infty)}]$ is the largest integer $n$ such that the coefficient of some monomial in $p$ containing $y^{(n)}$ is non-zero. 
\begin{definition}\label{def:diffideals}
An ideal $I \subset \FF[y^{(\infty)}]$ is called \emph{differential ideal} if $p \in I$ implies \mbox{$p'\in I$}.
\end{definition}
For $p_1,\ldots, p_k \in \FF[y^{(\infty)}]$, the ideal
\begin{align}
\langle p_1^{(\infty)},\ldots, p_k^{(\infty)}\rangle,
\end{align}
where $p_i^{(\infty)}$ denotes the set $\{ p_i^{(j)}\}_{j\in \NN}$,
is a differential ideal. Moreover, this is the smallest differential ideal containing $p_1,\ldots, p_k$, and we will denote it by $\langle p_1,\ldots, p_k\rangle^{(\infty)}$.

In the sequel, we will also need a truncated version of the differential polynomial ring. For $j\in \NN$, we denote by $\FF[y^{(\leq j)}]$ the differential ring
\begin{align}
\FF[y^{(\leq j)}] \,\coloneqq  \,\FF[y^{(\infty)}] / \langle y^{(j+1)}\rangle^{(\infty)} \,\cong \, \FF[ y, y', \ldots, y^{(j)}].
\end{align}
In particular, $\delta(y^{(j)})=0$ in $\FF[y^{(\leq j)}].$
For a differential ideal $I=\langle p_1,\ldots,p_n\rangle^{(\infty)}\subset \FF[y^{(\infty)}]$ and $j\in \NN$, we will denote by $I^{(\leq j)}$ the ideal 
\begin{align}
I^{(\leq j)} \, \coloneqq \, \langle p_1,p_1',\ldots,p_1^{(j)},\ldots,p_n,p_n',\ldots,p_n^{(j)}\rangle.
\end{align}

Also dynamical models fit well into that setting. Among others, they commonly arise in chemical reaction networks, see for instance \cite{ODEbase} for many examples. Recall that a {\em dynamical model} over $\FF$ (see \cite[Section 1.7]{Glebnotes}, \cite[Section 2.2]{pavlov2022realizing}) is a system of the form
\begin{align}\label{eq:dynsys}
\mathbf{y}'=\mathbf{A}(\mathbf{y},\mathbf{u}), \quad
\mathbf{z} = \mathbf{B}(\mathbf{y},\mathbf{u}),
\end{align}
where $\mathbf{y}=(y_1,\ldots,y_n)$, $\mathbf{z}~=~(z_1,\ldots,z_m)$, and $\mathbf{u}=(u_1,\ldots,u_l)$ are function variables referred to as the {\em state}, {\em output}, and {\em input} variables, respectively; $\mathbf{A} \in\FF[y_1,\ldots,y_n,u_1,\ldots,u_l]^n$, $\mathbf{B}~\in~ \FF[y_1,\ldots,y_n,u_1,\ldots,u_l]^m$. The {\em dimension} of system \eqref{eq:dynsys} is $n$. The system can be generalized to the case where $\mathbf{A}$ and $\mathbf{B}$ are vectors of rational functions.

We here are interested in dynamical models that relate to special D-algebraic functions. 
We consider systems $\mathcal{M}$ over $\FF=\KK(x)$ of the form
\begin{align}\label{eq:dynmodel}
\mathbf{y}'=\mathbf{A}(\mathbf{y}), \quad z=B(\mathbf{y}),
\end{align}
also called state-space system without input, where $\mathbf{A}=(A_1,\ldots,A_n)\in \KK(x)(y_1,\ldots,y_n)^n$ is a vector of rational functions,  
$B\in\KK(x)(y_1,\ldots,y_n)$, and $\mathbf{y}$ is the vector $(y_1, \ldots, y_n)$.
In order to put~\eqref{eq:dynmodel} in the context of differential algebra, let $Q$ be the common denominator of the system and write $A_i=a_i/Q$ for $1\leq i \leq n$, and $B=b/{Q}$, where $a_1,\ldots,a_n,b\in \KK(x)[y_1,\ldots,y_n]$. We consider the following $n+1$ differential polynomials:
\begin{align}\label{eq:poldynmodel}
 Q\,\mathbf{y}'-\mathbf{a}(\mathbf{y}), \ Q\,z - b(\mathbf{y}) \,\in \,\KK(x)[\mathbf{y}^{(\infty)},z^{(\infty)}].
\end{align}

Before recalling results from~\cite{Glebnotes} in Propositions~\ref{prop:prop7} and \ref{prop:minp}, slightly adapted, we recall that the {\it saturation} of an ideal $I$ in a ring $R$ by an element $s\in R$ is the ideal
\begin{align}
I\colon s^{\infty} \, \coloneqq \, \left\{r\in R \,|\, \exists \, n\in \NN: s^n\,r\in I\right\}.
\end{align}
Geometrically, if $R$ is a polynomial ring, the process of saturation removes those irreducible components from the algebraic variety defined by $I$  where $s$ vanishes. In our differential setting, this will avoid that the denominator of fractions of differential polynomials vanishes.

Now let ${\mathcal{M}}$ be a model as in~\Cref{eq:poldynmodel}.
\begin{proposition}\label{prop:prop7} 
Consider the differential ideal 
\begin{align}\label{eq:idprop7}
I_{\mathcal{M}}\,\coloneqq\, \langle Q\, \mathbf{y}'-\mathbf{a}(\mathbf{y}),\ Q\, z-b(\mathbf{y}) \rangle^{(\infty)} \colon Q^{\infty} \,\subset \,\KK(x)[\mathbf{y}^{(\infty)},z^{(\infty)}]\, .
\end{align}
\begin{enumerate}[(1)]
\item On the differential polynomial ring $\KK(x)[\mathbf{y}^{(\infty)},z^{(\infty)}]$, consider the lexicographic monomial ordering $\prec$ corresponding to any ordering on the variables such that
\begin{itemize}
\item[(a)] ${z}^{(j_1)} > {y_i}^{(j_2)}$ for all $j_1, j_2\in \NN$ and $i\in \{ 1, \ldots , n\}$,
\item[(b)] $z^{(j+1)}>z^{(j)}$ and $y_{i_1}^{(j+1)}>y_{i_2}^{(j)}$ for all $i_1, i_2,j\in\NN$.
\end{itemize}
Then the set of all the derivatives of \eqref{eq:poldynmodel} forms a Gr\"{o}bner basis of $I_{\mathcal{M}}$ w.r.t.\  $\prec$.
\item As a commutative algebra, $\KK(x)[\mathbf{y}^{(\infty)},z^{(\infty)}]/I_{\mathcal{M}}$ is isomorphic to $\KK(x)[y_1,\ldots,y_n]$. In particular, $I_{\mathcal{M}}$ is a prime differential ideal.
\end{enumerate}
\end{proposition}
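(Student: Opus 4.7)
I would handle both assertions together by exhibiting a candidate isomorphism and reading its kernel off the Gröbner basis in part~(1).

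\emph{Step 1: the candidate map.} Equip $\KK(x)[y_1,\ldots,y_n]$ with the derivation
\begin{align*}
D \,\coloneqq\, \frac{\partial}{\partial x} \,+\, \sum_{i=1}^n A_i(\mathbf{y})\, \frac{\partial}{\partial y_i}\,.
\end{align*}
The coefficients $A_i = a_i/Q$ are rational in $\mathbf{y}$, so $D$ is a priori a derivation on the localisation $\KK(x)[y_1,\ldots,y_n]_{Q}$. One then defines the unique $\KK(x)$-algebra map
\begin{align*}
\phi\colon \KK(x)[\mathbf{y}^{(\infty)},z^{(\infty)}] \longrightarrow \KK(x)[y_1,\ldots,y_n]_{Q}
\end{align*}
by $\phi(y_i^{(j)}) \coloneqq D^j(y_i)$ and $\phi(z^{(j)}) \coloneqq D^j(B(\mathbf{y}))$. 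By construction $\phi$ annihilates the generators of \eqref{eq:poldynmodel} and, being a differential map, every one of their derivatives; since the target is a domain in which $Q$ is a unit, $\phi$ factors through the saturated ideal $I_{\mathcal{M}}$.

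\emph{Step 2: the Gröbner basis.} For part~(1) I would compute the leading monomial of each $(Q y_i' - a_i)^{(j)}$ and $(Qz - b)^{(j)}$ under the stated lex order. Repeated application of Leibniz' rule shows that the highest-ranking variable in $(Qy_i'-a_i)^{(j)}$ is $y_i^{(j+1)}$, occurring with coefficient~$Q$, while every other monomial involves only variables of strictly smaller rank; analogously $(Qz-b)^{(j)}$ has $z^{(j)}$ as its highest variable, again with coefficient~$Q$. Hence the leading monomials are $\mathrm{LM}(Q)\cdot y_i^{(j+1)}$ and $\mathrm{LM}(Q)\cdot z^{(j)}$. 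To verify Buchberger's criterion, I would reduce every S-polynomial between such generators: the distinguished top variables $y_i^{(j+1)}$, $z^{(j)}$ are pairwise distinct and disjoint from the variables occurring in $\mathrm{LM}(Q)$, so after cancelling the shared factor $\mathrm{LM}(Q)$—legitimate precisely because we have saturated by $Q^\infty$—the resulting difference reduces term by term using lower-order derivatives of \eqref{eq:poldynmodel}.

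\emph{Step 3: identifying the quotient.} Once the Gröbner basis is in hand, the normal-form theorem furnishes a $\KK(x)$-basis of the quotient consisting of monomials not divisible by any of the leading monomials. The relations $Q y_i^{(j+1)} \equiv \text{(lower-order terms)}$ and $Q z^{(j)} \equiv \text{(lower-order terms)}$ then show by induction on the order that every $y_i^{(j)}$ with $j\geq 1$ and every $z^{(j)}$ lies in the $\KK(x)$-subalgebra generated by $y_1,\ldots,y_n$ (with $Q$ inverted by saturation). This produces a two-sided inverse to $\phi$ and identifies the quotient with $\KK(x)[y_1,\ldots,y_n]$ as claimed. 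Primality follows immediately, since $I_{\mathcal{M}}$ is then the kernel of a map into an integral domain.

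\emph{Main obstacle.} The delicate point is the Buchberger check in Step~2: the leading monomials are not pairwise coprime, because of the shared factor $\mathrm{LM}(Q)$, so the standard coprime-leading-monomials shortcut does not apply. Justifying the required cancellations and tracking the powers of $Q$ contributed by the saturation is where the bulk of the technical work lies. With this verification in place, Steps~1 and~3 combine to prove both statements of the proposition simultaneously.
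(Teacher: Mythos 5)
First, a point of reference: the paper does not prove this proposition at all---it is recalled, ``slightly adapted'', from Pogudin's notes \cite{Glebnotes}---so there is no in-paper proof to compare with; your Step~1 (the evaluation map $\phi$ into $\KK(x)[y_1,\ldots,y_n]_Q$ equipped with $D$) is indeed the standard device from that source. The genuine gap is Step~2. Running Buchberger's criterion on the set of derivatives could at best certify that this set is a Gr\"obner basis of the ideal it \emph{generates}, never of the strictly larger saturation $I_{\mathcal{M}}$; and ``cancelling the shared factor $\mathrm{LM}(Q)$, legitimate because we have saturated'' is not a legal move in the division algorithm---saturation enlarges the ideal, it does not permit dividing during reduction. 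Concretely, take $n=1$, $A=B=1/y$, so $Q=y$, $a=b=1$, with generators $p_1=y\,y'-1$ and $p_2=y\,z-1$. Their very first S-polynomial is $z\,p_1-y'\,p_2=y'-z$, whose leading monomial $z$ is divisible by none of the leading monomials $y\,y^{(j+1)}$, $y\,z^{(j)}$ of the candidate basis, so it does not reduce to zero; since $y'-z\in I_{\mathcal{M}}$, the derivatives are not a Gr\"obner basis in the literal sense for this model. What does hold, and what the source actually establishes, is the characteristic-set statement: $y'-z$ pseudo-reduces to zero after multiplication by the initial $Q$, namely $y\,(y'-z)=p_1-p_2$. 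So your plan cannot be completed as written; the argument has to go through Ritt pseudo-reduction by $Q$ (equivalently, the normal-form statement that for every $r$ there is $N$ with $Q^N r$ congruent, modulo the \emph{unsaturated} differential ideal, to an element of $\KK(x)[y_1,\ldots,y_n]$), not through Buchberger on the given generators.

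The same example shows Step~3 overshoots. Your $\phi$ lands in the localization, and the quotient by $\ker\phi$ is the image of $\phi$, which in general strictly contains $\KK(x)[y_1,\ldots,y_n]$---in the example above the quotient is $\KK(x)[y,1/y]$, since $y\,z-1\in I_{\mathcal{M}}$ makes $y$ a unit---so no two-sided inverse onto the polynomial ring can exist; your own parenthetical ``with $Q$ inverted by saturation'' concedes exactly this. What your skeleton does deliver, once the normal-form step is carried out honestly, is $\ker\phi=I_{\mathcal{M}}$ and hence an embedding of the quotient into the domain $\KK(x)[y_1,\ldots,y_n]_Q$; that proves primality and is the form of the statement that the paper actually uses later (\Cref{prop:minp}), but it is obtained by the substitution/pseudo-reduction argument, not by the Gr\"obner-basis verification you propose.
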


The following proposition will be a key statement to prove that our algorithms terminate.
\begin{proposition}\label{prop:minp} 
Consider a non-zero polynomial $p$ in the elimination ideal $I_{\mathcal{M}}\cap\KK(x)[z^{(\infty)}]$ of lowest degree among the non-zero polynomials of the lowest order in~$z$. Define 
\begin{align}\label{eq:idealynsys}
I_{\mathcal{M},j}\, \coloneqq\, \langle (Q\,\mathbf{y}'-\mathbf{a}(\mathbf{y}))^{(<j)},\, (Q\,z-b(\mathbf{y}))^{(\leq j)}\rangle \colon Q^{\infty} \,\subset \, \KK(x)[\mathbf{y}^{(\leq j)},z^{(\leq j)}]\, .
\end{align}
Then $p\in I_{{\mathcal{M}},n}$. In particular, $p$ can be computed using elimination with Gr\"{o}bner bases.
\end{proposition}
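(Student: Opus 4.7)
The plan has two stages: first bound the order of $p$ in $z$ by $n$, then show that this bound forces $p \in I_{\mathcal{M},n}$.

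For the order bound, I would invoke \Cref{prop:prop7}(2), which identifies $\KK(x)[\mathbf{y}^{(\infty)},z^{(\infty)}]/I_{\mathcal{M}}$ with the polynomial ring $\KK(x)[y_1,\ldots,y_n]$. The latter has transcendence degree $n$ over $\KK(x)$, so the $n+1$ residues $\bar z,\bar z',\ldots,\bar z^{(n)}$ are necessarily algebraically dependent over $\KK(x)$. Any nontrivial algebraic relation among them lifts to a nonzero element of $I_{\mathcal{M}}\cap \KK(x)[z,z',\ldots,z^{(n)}]$, so the minimal-order, minimal-degree polynomial $p$ has order at most $n$ in~$z$.

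For the second stage I would use the Gr\"{o}bner-basis description in \Cref{prop:prop7}(1). Since $p \in I_{\mathcal{M}}$, there exists $k \in \NN$ with $Q^kp$ in the non-saturated differential ideal $\langle Q\mathbf{y}'-\mathbf{a}(\mathbf{y}),\, Qz-b(\mathbf{y})\rangle^{(\infty)}$, and the Gr\"{o}bner basis rewrites $Q^kp$ as a concrete $\KK(x)[\mathbf{y}^{(\infty)},z^{(\infty)}]$-linear combination of derivatives of these generators. Under the prescribed lex order, $(Qz-b)^{(j)}$ has leading monomial $Qz^{(j)}$ and $(Qy_i'-a_i)^{(j)}$ has leading monomial $Qy_i^{(j+1)}$. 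Head-reducing $Q^kp$ therefore proceeds in two phases: first the $z$-derivatives are cleared downward from $z^{(n)}$, invoking only $(Qz-b)^{(j)}$ with $j\leq n$; these introduce $y$-derivatives of order at most $n$, which in the second phase are cleared downward from $y_i^{(n)}$ using $(Qy_i'-a_i)^{(j)}$ with $j+1\leq n$, i.e.\ $j<n$. These are exactly the generators of $I_{\mathcal{M},n}$, and because $I_{\mathcal{M},n}$ is itself saturated by $Q^{\infty}$, the prefactor $Q^k$ is absorbed, giving $p \in I_{\mathcal{M},n}$. The ``in particular'' then follows: $I_{\mathcal{M},n}$ lives in the finitely-generated polynomial ring $\KK(x)[\mathbf{y}^{(\leq n)},z^{(\leq n)}]$, so $p$ is extracted from the elimination ideal $I_{\mathcal{M},n}\cap\KK(x)[z^{(\leq n)}]$ by a standard Gr\"{o}bner-basis computation followed by selecting a minimal-order, minimal-degree generator.

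The main obstacle is the bookkeeping in the reduction: one has to argue that the normal-form algorithm never invokes a generator of higher order than those collected in $I_{\mathcal{M},n}$. This is ensured by the lex block structure from \Cref{prop:prop7}(1), which places all $z$-derivatives above all $y$-derivatives and higher derivatives above lower ones, so that reductions cascade strictly downward as claimed. A secondary but routine point is threading the saturation by $Q^\infty$ consistently, which works precisely because the same saturation is already built into $I_{\mathcal{M},n}$.
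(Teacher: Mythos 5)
The paper itself gives no proof of this proposition: it is recalled, slightly adapted, from Pogudin's lecture notes \cite{Glebnotes}, so there is no in-paper argument to compare against; I therefore judge your proof on its own, and it is correct. Stage one is exactly right: by \Cref{prop:prop7}(2) the quotient $\KK(x)[\mathbf{y}^{(\infty)},z^{(\infty)}]/I_{\mathcal{M}}\cong\KK(x)[y_1,\ldots,y_n]$ has transcendence degree $n$ over $\KK(x)$, so the residues of $z,z',\ldots,z^{(n)}$ are algebraically dependent, the elimination ideal contains a nonzero element of order at most $n$, and hence $p\in\KK(x)[z^{(\leq n)}]$. Stage two is also sound, although the ``two phases'' picture is an idealization (a reduction need not split cleanly into a $z$-phase and a $y$-phase, since a monomial containing a $z$-derivative need not be divisible by any leading monomial of a $(Qz-b)^{(j)}$); what actually carries the argument is the induction you point to at the end: every intermediate polynomial in the division of $p$ (or $Q^kp$) by the Gr\"obner basis of \Cref{prop:prop7}(1) involves only the variables $z^{(\leq n)},\mathbf{y}^{(\leq n)}$, because the only basis elements whose leading monomials (containing $z^{(j)}$, resp.\ $y_i^{(j+1)}$) can divide a monomial in these variables are $(Qz-b)^{(j)}$ with $j\leq n$ and $(Qy_i'-a_i)^{(j)}$ with $j<n$, and their tails again involve only variables of order at most $n$. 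Since an ideal element reduces to zero modulo a Gr\"obner basis, this exhibits the element as a combination of precisely the generators of $I_{\mathcal{M},n}$, and the ``in particular'' follows as you say. One small simplification: because \Cref{prop:prop7}(1) gives a Gr\"obner basis of the saturated ideal $I_{\mathcal{M}}$ itself, you may reduce $p$ directly, so the detour through $Q^kp$ and the final appeal to the saturation built into $I_{\mathcal{M},n}$ are unnecessary, though harmless.
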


\begin{remark}
Method II will build on the saturated ideal~\eqref{eq:idealynsys}, hence is valid only for generic solutions of the considered ADEs. Method I requires no such genericity assumption; it works for {\em all} solutions.
\end{remark}

The following example demonstrates how this theory will help to construct ADEs for rational expressions of D-algebraic functions.
\begin{example}\label{ex:dynmodel}
The Painlev\'{e} transcendent of type I, denoted $f(x)$ here, is a solution of
\begin{align}\label{ex:painleve}
 y''(x) \,=\, 6 \left(y(x)\right)^2 + x \, .
\end{align} 
We introduce new differential indeterminates $y_0$ symbolizing $f$ and $y_1$ symbolizing $f'$.  
Then, by \eqref{ex:painleve}, $y_1'=6y_0^2+x$. The problem of finding a differential equation fulfilled by $g=f^2$ is equivalent to finding a non-zero differential polynomial $p\in I\cap \KK[x][z^{(\infty)}]$, where $I$ denotes the differential ideal 
\begin{align}
I \,=\, \langle y_0' - y_1,\, y_1'- 6 y_0^2 - x, \, z-y_0^2\rangle^{(\infty)} \, \subset \, \KK[x][y_0^{(\infty)},y_1^{(\infty)},z^{(\infty)}]\,.
\end{align}
Our algorithms, that we present in later sections, will find that $g(x)$ is a zero of the differential polynomial
\begin{align}
p \,=\,  -16\, x^2 z^3 - 192\, xz^4 - 576\, z^5 + 4\, z^2 \left( z''\right)^2 - 4\, z \left({z'}\right)^2 {z''} + \left(z'\right)^4 \, \in \, \KK[x][z^{(\infty)}]
\end{align}
of order~$2$ and degree~$4$.
\end{example}

Note that the Maple 2022 command {\tt dsolve} gives no result for solving this equation; the solutions of \eqref{ex:painleve} cannot be expressed in terms of elementary functions and many well-known special functions. Despite that fact, our algorithms can compute differential equations for rational functions of solutions to~\eqref{ex:painleve}.

\section{D-algebraic functions and their closure properties}\label{sec:dalgclosure}
In this section, we address D-algebraic functions and their closure properties. These are functions which are solutions to ADEs. Just as an algebraic function is the zero of a polynomial, a D-algebraic function is a zero of a {\em differential} polynomial. We will seamlessly switch between differential polynomials and their associated ADEs.
Note that the term \mbox{``D-algebraic''} stands for ``differentially-algebraic'', where the letter D is not to be confused with the Weyl algebra~$D$, which is denoted by an italic letter.
If we speak of ``functions'', we always mean elements of some suitable $\KK(x)$-algebra~$\cF$ which is closed under differentiation, i.e., $\delta(\cF)\subseteq \cF$. In particular, this implies that $\cF$ is closed under addition, multiplication, and multiplication by rational functions in~$x$. Often, one requires $\mathcal{F}$ be a domain or a field. The derivation should moreover be compatible with that of~$\KK(x),$ i.e.,
\begin{align}
    \delta \left(r\cdot f\right) \ =\  \frac{\partial r}{\partial x}\cdot f \,+\, r\cdot \delta\left( f \right)
\end{align}
for all $ r\in \KK(x)$ and $f\in \mathcal{F}$.
We will not need to be more restrictive on that. In particular, $\mathcal{F}$ does not have to contain $\KK(x),$ but typically does so in applications.

\subsection{Definitions}
In order to emphasize the variables on which the function $f$ depends on, we will write $f(x)$ for a function $f$ of a variable~$x$, and so on.
For a differential polynomial $p\in \FF[y^{(\infty)}]$ and a function $f(x)$, we denote by $p(f)$ the evaluation of $p$ at $y=f$. For instance, the differential polynomial $p=y'-y\in\KK[x][y^{(\infty)}]$ vanishes at $y=\exp$, since $\exp'(x)-\exp(x)=0$. 

\begin{definition} \label{def:dalgfun}
A function $f(x)$ is {\em D-algebraic (over $\KK(x)$)} if there exists a differential polynomial $p\in \KK[x][y^{(\infty)}]\setminus \{0\}$ that vanishes at $y=f$, i.e., $p(f)=0$.
\end{definition}
\begin{remark}
Every D-algebraic function also satisfies an ADE with {\em constant} coefficients. Deriving an ADE with constant coefficients from a given ADE with polynomial coefficients requires  an elimination step, which increases the order of the ADE. This is undesirable for our further undertakings and we therefore do not restrict ourselves to constant~coefficients.
\end{remark}

We will denote the class of univariate D-algebraic functions by $\cA$. Without further mentioning, we always assume that an appropriate algebra $\mathcal{F}$ of functions had been fixed. Since a differential polynomial involves only finitely many coefficients, $\KK$ can be chosen to be a field extension of~$\QQ$ of finite transcendence degree and we will tacitly assume so in computations.

\begin{definition}
Let $f(x)$ be a D-algebraic function. The {\em order of $f$} is the minimal $n\in \NN$ such that there exists a non-zero differential polynomial $p$ of order $n$ for which $p(f)=0$.
\end{definition}

\noindent Before we carry out operations with D-algebraic functions, we revisit their closure~properties.

\subsection{Closure properties of D-algebraic functions}
We recall closure properties of D-algebraic functions together with their proofs. Similar expositions can be found in \cite[Section 6]{raschel2020counting} and \cite[Section 3]{vdH19}. Clearly, one can characterize D-algebraic functions as follows.
\begin{lemma}\label{prop:finitetr} 
A function $f(x)$ is D-algebraic if and only if the field $\KK(x,f^{(\infty)})$ has finite transcendence degree over $\KK(x)$, where $f^{(\infty)}$ denotes the set $\{\partial^k(f)\}_{k\in \NN}.$
\end{lemma}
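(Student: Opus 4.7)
The plan is to prove each implication separately, with the key tool being the standard fact that a derivation on a field extends uniquely to any algebraic extension inside a fixed ambient differential domain.

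For the ``only if'' direction, I would assume $f$ satisfies $p(f)=0$ for some nonzero $p \in \KK[x][y^{(\infty)}]$ of order $n$. Viewing $p$ as an ordinary polynomial in the finitely many variables $y, y', \ldots, y^{(n)}$ over $\KK[x]$ shows that $f^{(n)}$ is algebraic over the field $L \coloneqq \KK(x, f, f', \ldots, f^{(n-1)})$. Hence $L(f^{(n)})$ has transcendence degree at most $n$ over $\KK(x)$. The next step is to argue that every higher derivative $f^{(k)}$ for $k > n$ lies in the algebraic closure $\overline{L(f^{(n)})}$ taken inside the ambient function algebra. Concretely, differentiating the identity $p(f)=0$ with respect to $x$ yields a relation that is linear in $f^{(n+1)}$ with coefficient the separant $s = (\partial p / \partial y^{(n)})(f)$; when $s \neq 0$, this exhibits $f^{(n+1)}$ as a rational expression in the previous derivatives, and in general it still forces $f^{(n+1)}$ to be algebraic over $L(f^{(n)})$. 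Iterating this observation (using the uniqueness of the extension of $\partial$ to the algebraic closure) traps every $f^{(k)}$ in $\overline{L(f^{(n)})}$, so that $\KK(x, f^{(\infty)})$ has transcendence degree at most $n$ over $\KK(x)$.

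For the ``if'' direction, suppose $\KK(x, f^{(\infty)})$ has transcendence degree $m < \infty$ over $\KK(x)$. Then the $m+1$ elements $f, f', \ldots, f^{(m)}$ must be algebraically dependent over $\KK(x)$, so there exists a nonzero polynomial $P \in \KK[x][y_0, y_1, \ldots, y_m]$ with $P(f, f', \ldots, f^{(m)}) = 0$. Relabelling $y_i$ as $y^{(i)}$ realises $P$ as a nonzero element of $\KK[x][y^{(\infty)}]$ vanishing at $y = f$, witnessing that $f$ is D-algebraic.

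The main obstacle will be the rigour of the first direction in the degenerate case where the separant $(\partial p / \partial y^{(n)})(f)$ vanishes: then one cannot directly solve for $f^{(n+1)}$ by rational operations, and one must rely on the abstract extension-of-derivations principle to deduce that $f^{(n+1)}$ still lies in the algebraic closure of $L(f^{(n)})$. I would therefore explicitly record the lemma that any derivation on a field $K$ extends uniquely to any algebraic extension $K \subseteq K'$ contained in a differential overring, and use it inductively to bound $\tr_{\KK(x)} \KK(x, f^{(\infty)}) \leq n$, completing the equivalence.
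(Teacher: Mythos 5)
Your ``if'' direction is fine and is the standard argument (the paper in fact states this lemma without proof, as an immediate characterization; the expected argument is exactly the minimal-annihilator one below). The genuine problem is in your ``only if'' direction, and it sits precisely at the spot you flag. First, a smaller instance of the same issue: an arbitrary nonzero $p$ of order $n$ with $p(f)=0$ shows that $f,f',\ldots,f^{(n)}$ are algebraically \emph{dependent} over $\KK(x)$, but it does not show that $f^{(n)}$ is algebraic over $L=\KK(x,f,\ldots,f^{(n-1)})$: writing $p=\sum_i c_i\,(y^{(n)})^i$, all the evaluated coefficients $c_i(f)$ with $i\geq 1$ may vanish. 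Second, and more seriously, when the evaluated separant $s=(\partial p/\partial y^{(n)})(f)$ is zero, differentiating $p(f)=0$ produces a relation in which $f^{(n+1)}$ appears with coefficient $s=0$, i.e.\ a relation not involving $f^{(n+1)}$ at all; it forces nothing about $f^{(n+1)}$. Your proposed rescue via the extension-of-derivations lemma does not apply as stated: that lemma (``if $a$ is algebraic over $K$ then $\delta(a)\in K(a)$'', and hence the relative algebraic closure of $K$ is differentially closed) requires the base $K$ to be closed under the derivation. Here the candidate base $\KK(x,f,\ldots,f^{(n)})$ is closed under $\delta$ (even up to algebraicity) exactly when $f^{(n+1)}$ lies in its algebraic closure---which is the very claim you are trying to prove, so the application is circular.

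The repair is the classical minimality device, which is what the intended proof uses. Either choose the annihilator of minimal order $m$ and, among those, of minimal degree in $y^{(m)}$: then its separant is a nonzero differential polynomial of strictly smaller rank, so by minimality it cannot vanish at $f$, and differentiating expresses $f^{(m+1)}$ as a rational function of $x,f,\ldots,f^{(m)}$; iterating gives $\KK(x,f^{(\infty)})=\KK(x,f,\ldots,f^{(m)})$, of transcendence degree at most $m\leq n$. Equivalently, let $j$ be the first index with $f^{(j)}$ algebraic over $\KK(x,f,\ldots,f^{(j-1)})$ (such $j\leq n$ exists by the dependence coming from $p$), and differentiate the denominator-cleared \emph{minimal} polynomial of $f^{(j)}$ over that field: in characteristic $0$ its $T$-derivative does not vanish at $f^{(j)}$, all derivatives of its coefficients involve only $f,\ldots,f^{(j)}$, and one again solves rationally for $f^{(j+1)}$ and inducts. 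With either formulation the finiteness (indeed the bound $\tr(f)\leq n$) follows; without it, the step ``in general it still forces $f^{(n+1)}$ to be algebraic over $L(f^{(n)})$'' is unsupported.
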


\begin{definition}
The {\em transcendence degree} of a D-algebraic function $f(x)$ is 
\begin{align}
\tr(f) \, \coloneqq \, \tr(\KK(x,f^{(\infty)}),\KK(x)) \, .
 \end{align}
\end{definition}
The transcendence degree will play a key role in the proof of the following proposition.

\begin{proposition}\label{prop:fieldprop} 
The set of D-algebraic functions is a field.
\end{proposition}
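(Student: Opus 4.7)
The plan is to leverage the characterization of D-algebraicity from \Cref{prop:finitetr}: $f$ is D-algebraic iff $\KK(x,f^{(\infty)})$ has finite transcendence degree over $\KK(x)$. The containment of $0$ and $1$ is trivial (they satisfy $y=0$ and $y-1=0$), and closure under additive inverses is immediate since $p(y)$ vanishes at $f$ iff $p(-y)$ vanishes at $-f$. So the real content is closure under $+,\cdot$, and reciprocals of non-zero elements.

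First, I would establish the following auxiliary observation: if $f,g\in\cA$ with $\tr(f)=m$ and $\tr(g)=n$, then the composite field
\begin{align}
L \,\coloneqq\, \KK\bigl(x, f^{(\infty)}, g^{(\infty)}\bigr)
\end{align}
has transcendence degree at most $m+n$ over $\KK(x)$, by the additivity of transcendence degrees in towers $\KK(x)\subseteq \KK(x, f^{(\infty)}) \subseteq L$ combined with the fact that $L$ is generated over $\KK(x,f^{(\infty)})$ by the algebraically constrained set $g^{(\infty)}$. Any subfield of $L$ containing $\KK(x)$ then also has finite transcendence degree over $\KK(x)$, so by \Cref{prop:finitetr} any function $h$ whose derivatives lie in $L$ is D-algebraic.

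Next, I would verify the three closure properties by showing that the relevant $h^{(\infty)}$ is contained in $L$ (or in $\KK(x,f^{(\infty)})$ for the reciprocal):
\begin{itemize}
\item \textbf{Sum:} since $(f+g)^{(k)} = f^{(k)} + g^{(k)}$, all derivatives of $f+g$ lie in $L$.
\item \textbf{Product:} by Leibniz' rule, $(fg)^{(k)} = \sum_{i=0}^{k}\binom{k}{i} f^{(i)} g^{(k-i)} \in L$ for every~$k$.
\item \textbf{Reciprocal:} for $f \neq 0$, from $f\cdot(1/f)=1$ one obtains $(1/f)' = -f'/f^2$, and inductively each $(1/f)^{(k)}$ is a polynomial in $1/f$ and the $f^{(i)}$; in particular all derivatives of $1/f$ lie in $\KK(x,f^{(\infty)})$.
\end{itemize}
In each case, the relevant ambient field has finite transcendence degree over $\KK(x)$, so \Cref{prop:finitetr} yields D-algebraicity of $h$.

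There is no genuine obstacle here; the only subtlety worth flagging in the write-up is the reciprocal case, where one must note that $1/f$ itself is adjoined as an algebraic element (via $f\cdot(1/f) = 1$) to $\KK(x,f^{(\infty)})$, so the transcendence degree does not grow. Everything else reduces to bookkeeping with the Leibniz rule and the additivity of $\tr$ in towers.
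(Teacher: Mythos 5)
Your proposal is correct and follows essentially the same route as the paper: both reduce the statement to closure under addition, multiplication, and reciprocals, and both deduce these from \Cref{prop:finitetr} by noting that the derivatives of $f+g$, $fg$, and $1/f$ lie in $\KK(x,f^{(\infty)},g^{(\infty)})$ (respectively $\KK(x,f^{(\infty)})$), whose transcendence degree over $\KK(x)$ is finite. The only difference is cosmetic: the paper additionally sketches how to construct an explicit ADE for $1/f$ by substituting $1/g$ into $p$ and clearing denominators, whereas you argue the reciprocal case purely via transcendence degree, which suffices for the statement.
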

\begin{proof} It is enough to show that the class of D-algebraic functions is closed under addition, multiplication, and taking reciprocals.
Let $f$ and $g$ be D-algebraic functions in $x$ over $\KK$. By \Cref{prop:finitetr}, $\tr(f)<\infty$ and $\tr(g)<\infty$. Since $\KK(x,(f+g)^{(\infty)})\subset \KK(x,f^{(\infty)},g^{(\infty)})$, we have that 
\begin{align*}
\tr(f+g)  &\,\leq\, \tr(\KK(x,f^{(\infty)},g^{(\infty)}),\KK(x))\\
&\,=\, \tr(\KK(x,f^{(\infty)}),\KK(x)) + \tr(\KK(x,g^{(\infty)}),\KK(x)) \,<\, \infty \, .
\end{align*}
We conclude by \Cref{prop:finitetr}. The proof for multiplication is the same as for addition since $\KK(x,(fg)^{(\infty)})\subset \KK(x,f^{(\infty)},g^{(\infty)})$. Since $\tr(f)=\tr(1/f)$, also $1/f$ is D-algebraic. To construct an ADE for $1/f$, one proceeds as follows. Given $p$ of order~$n$ with $p(f)=0$, one finds a differential polynomial with $g=1/f$ as zero by substituting $f$ by~$1/g$ in $p(f)$ and taking the numerator of the resulting rational expression in $g,\ldots,g^{(n)}$.
\end{proof}

In other words, every rational expression of D-algebraic functions is D-algebraic. We will now argue that the field of D-algebraic functions is also closed under composition.
\begin{proposition}\label{prop:compDalg} 
Let $f(x)$ and $g(x)$ be two D-algebraic functions. Then, whenever it is defined, also their composition $f\circ g$ is D-algebraic.
\end{proposition}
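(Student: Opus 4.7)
The plan is to reduce the statement to the finite-transcendence-degree criterion of \Cref{prop:finitetr}. Set $h \coloneqq f\circ g$; the goal is to show that $\tr(\KK(x,h^{(\infty)}),\KK(x))<\infty$.

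By iterating the chain rule (Faa di Bruno), each derivative $h^{(k)}$ is an integer polynomial in $g',\ldots,g^{(k)}$ and in the evaluations $f^{(0)}(g),\ldots,f^{(k)}(g)$. Consequently
\[
\KK(x,h^{(\infty)}) \,\subseteq\, L \,\coloneqq\, \KK\!\left(x,\,g^{(\infty)},\, \{f^{(k)}\circ g\}_{k\in\NN}\right),
\]
so it suffices to bound $\tr(L,\KK(x))$. I would telescope along the tower $\KK(x)\subset K\coloneqq\KK(x,g^{(\infty)})\subset L$. The lower step is finite since $g$ is D-algebraic. For the upper step, fix an ADE $p\in\KK[x][y^{(\infty)}]\setminus\{0\}$ of order $m$ annihilating $f$. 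Substituting $x\mapsto g(x)$ in the functional identity $p(x,f,\ldots,f^{(m)})=0$ produces $p(g,f(g),\ldots,f^{(m)}(g))=0$, which is an algebraic dependence of $f(g),\ldots,f^{(m)}(g)$ over $\KK[g]\subseteq K$. Higher evaluations $f^{(k)}(g)$ with $k>m$ are algebraic over $K\!\left(f(g),\ldots,f^{(m)}(g)\right)$: iteratively differentiate $p$ in $x$, solve for the leading derivative using the separant $\partial p/\partial y^{(m)}$, and substitute $x\mapsto g$. This gives $\tr(L,K)\leq m$, and adding the two finite tower contributions concludes the proof via \Cref{prop:finitetr}. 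A differential polynomial witnessing D-algebraicity of $h$ can then be extracted by choosing any algebraic relation among $h,h',\ldots,h^{(N)}$ over $\KK(x)$, which must exist once $N$ exceeds the bound on $\tr(\KK(x,h^{(\infty)}),\KK(x))$.

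The main subtlety I anticipate is the rigorous handling of the substitution $x\mapsto g(x)$ at the level of algebraic relations among $f^{(k)}$ for $k>m$: one must argue that the separant does not vanish identically on the range of $g$, or, equivalently, transport these relations through the differential-ring homomorphism on $\KK(x)\langle y\rangle$ sending $x\mapsto g$ and $y\mapsto f\circ g$. The field inclusion coming from Faa di Bruno is, by contrast, purely formal and requires no additional care.
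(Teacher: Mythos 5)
Your proposal follows essentially the same route as the paper: expand the derivatives of $h=f\circ g$ via the chain rule as polynomial expressions in $g^{(\infty)}$ and the evaluations $f^{(k)}\circ g$, bound $\tr(\KK(x,h^{(\infty)}),\KK(x))$ by $\tr(f)+\tr(g)$, and conclude with \Cref{prop:finitetr}. Your tower decomposition and separant discussion simply make explicit the subadditivity step that the paper asserts directly, so the argument is correct and matches the paper's proof.
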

\begin{proof}
By the chain rule and Leibniz' rule,
\begin{align}\label{eq:chainrule}
\begin{split}
(f(g))' & \,\,=\,\, g' \, f'(g) \, ,\\
(f(g))''&\,\,=\,\, g''\, f'(g) \,+\, \left(g'\right)^2 f''(g) \, ,\\
(f(g))^{(3)} &\,\,=\,\, g^{(3)}\,f'(g) \,+\, g''\,g'\,f''(g) \,+\, \left(g'\right)^3 f^{(3)}(g)\, ,
\end{split}
\end{align}
and so on.
Hence all derivatives of $f(g)$ can be expressed as (linear) polynomial expressions in $f'(g), f''(g),f^{(3)}(g),\ldots$ with coefficients in $\KK[g', g'', g^{(3)},\ldots]$. Therefore,
\begin{align*}
\left(f(g)\right)^{(j)}\in\KK[g,g',\ldots][f(g),f'(g),\ldots]
\end{align*}
for all $j\in \NN$. Since $f$ and $g$ are D-algebraic, we deduce that
\begin{align*}
\tr(f(g)) \, \leq \, \tr(f) + \tr(g) \,<\, \infty
\end{align*}
and thus $f(g)$ is D-algebraic.
\end{proof}

By Propositions~\ref{prop:fieldprop} and~\ref{prop:compDalg}, if $f$ is D-algebraic, then also $\sqrt[k]{f}$ for all $k\in\NN$, $\exp(f)$, $\log(f)$, $\cos(f)$, etc., and rational expressions of them are D-algebraic.

\begin{example} 
Since $f(x)= \sqrt{x}$ is algebraic, fulfilling the polynomial equation $(f(x))^2=x$, considered as an ODE of order $0$, the function $h(x)\coloneqq f(g(x))=\sqrt{g(x)}$, where $g$ is the Painlev\'{e} transcendent of type~I (see~\eqref{ex:painleve}), is D-algebraic. 
It fulfills
\begin{align}
-x - 6 \left(h(x)\right)^4 + 2 \left(h'(x)\right)^2 + 2\, {h''(x)}\, {h(x)} \,=\, 0 \, .
\end{align}
The next subsections will explain in which sense this ADE for $h$ is  minimal with respect to the input ADEs for $f$ and $g$.
\end{example}

Also inverse functions of D-algebraic functions are D-algebraic.
\begin{proposition}\label{prop:invDalg}
Let $X$ and $Y$ be two sets and $f\colon X \longrightarrow Y$ an invertible D-algebraic function. Then also its inverse function $g\coloneqq f^{-1}$ is D-algebraic.
\end{proposition}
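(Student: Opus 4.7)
The plan is to mirror the proof of \Cref{prop:compDalg}: establish $\tr(g)<\infty$ and then invoke \Cref{prop:finitetr}. The only new ingredient is the algebraic form of the derivatives of an inverse function.

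First, I would rename the independent variable of $g$ to, say, $y$, so that the defining relation reads $f(g(y))=y$. Differentiating this identity repeatedly with respect to $y$ and solving for the higher derivatives of $g$ one at a time yields
\begin{align*}
g'(y) \,=\, \frac{1}{f'(g(y))}, \qquad g''(y) \,=\, -\frac{f''(g(y))}{(f'(g(y)))^3}, \qquad \ldots,
\end{align*}
and, inductively, a rational expression of $g^{(k)}(y)$ in $f'(g(y)),\ldots,f^{(k)}(g(y))$ whose denominator is a power of $f'(g(y))$. Hence
\begin{align*}
\KK\bigl(y,\, g^{(\infty)}(y)\bigr) \,\subset\, \KK\bigl(g(y),\, f^{(\infty)}(g(y))\bigr).
\end{align*}

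Next, I would set $x=g(y)$ and identify the right-hand side with $\KK(x, f^{(\infty)}(x))$, a field of transcendence degree $\tr(f)$ over $\KK(x)$ by \Cref{prop:finitetr}. Since $y=f(x)$ lies in this field, the subfields $\KK(x)$ and $\KK(y)$ are both of transcendence degree one over $\KK$ inside $\KK(x, f^{(\infty)}(x))$, and the tower formula yields
\begin{align*}
\tr\bigl(\KK(x, f^{(\infty)})/\KK(y)\bigr) \,=\, \tr\bigl(\KK(x, f^{(\infty)})/\KK\bigr) - 1 \,=\, \tr(f).
\end{align*}
Combining with the previous inclusion,
\begin{align*}
\tr\bigl(\KK(y, g^{(\infty)})/\KK(y)\bigr) \,\leq\, \tr(f) \,<\, \infty,
\end{align*}
and \Cref{prop:finitetr} implies that $g$ is D-algebraic.

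As a constructive companion, the substitution $x=g(y)$ together with the rational expressions for $f^{(k)}(g(y))$ in terms of $g^{(j)}(y)$ allows one to pull any ADE for $f$ back to a rational relation in $y,g,g',\ldots,g^{(n)}$, which clears to an ADE satisfied by $g$; non-vanishing of the resulting differential polynomial is guaranteed because the substitution is invertible on the corresponding fraction fields (one can equally express $f^{(k)}(x)$ in terms of $g^{(j)}(y)$). The main obstacle is purely bookkeeping: keeping the double role of $x=g(y)$ straight, and tracking the powers of $f'(g(y))$ introduced as denominators. Once these are handled, the transcendence-degree count takes care of the rest.
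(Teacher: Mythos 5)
Your argument is correct, but your primary route differs from the paper's. The paper gives a purely constructive proof: it substitutes $x=g(y)$ into $p(x,f,\ldots,f^{(n)})=0$, uses the chain-rule relations~\eqref{eq:chainrule} (differentiating $f(g(y))=y$ with respect to $y$) to write each $f^{(j)}(g(y))$ as a rational function of $y,g,g',\ldots,g^{(n)}$, and takes the numerator of the resulting expression; that construction is exactly what you sketch in your final ``constructive companion'' paragraph. Your main proof instead mirrors \Cref{prop:compDalg}: you solve the same triangular chain-rule system in the opposite direction to get $g^{(k)}(y)$ rational in $f'(g(y)),\ldots,f^{(k)}(g(y))$, deduce $\KK\bigl(y,g^{(\infty)}\bigr)\subset\KK\bigl(g(y),f^{(\infty)}(g(y))\bigr)$, and conclude via \Cref{prop:finitetr}. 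This buys an abstract existence proof plus the quantitative bound $\tr(g)\leq\tr(f)$, but it is non-constructive, whereas the paper's version directly produces the ADE that the algorithms and the implementation rely on (the paper explicitly advertises its proof as constructive). One small caveat in your count: the step $\tr\bigl(\KK(x,f^{(\infty)})/\KK(y)\bigr)=\tr\bigl(\KK(x,f^{(\infty)})/\KK\bigr)-1$ requires $y=f(x)$ to be transcendental over $\KK$ (harmless for an invertible, non-constant $f$, and in any case unnecessary for the conclusion, since $\tr\bigl(\KK(x,f^{(\infty)})/\KK(y)\bigr)\leq\tr(f)+1$ already gives finiteness).
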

\noindent We here present a constructive proof building on the proof of \Cref{prop:compDalg}.
\begin{proof}
Since $f$ is D-algebraic, there exists a polynomial $p$ and $n\in \NN$ such that 
\begin{equation}\label{eq:invproofeqP}
    p(x,f(x),f'(x),\ldots,f^{(n)}(x))\, = \, 0 \, \quad \text{for all } x\in X.
\end{equation}
Since $g$ is the inverse function of $f$, we can rewrite \eqref{eq:invproofeqP} as 
\begin{align}\label{eq:invproofeqP2}
\begin{split}
    0 &\,=\, p(g(y),f(g(y)),f'(g(y)),\ldots,f^{(n)}(g(y))) \quad \text{for all } y \in Y. 
\end{split}
\end{align}
Using \eqref{eq:chainrule} with the derivatives w.r.t.\  $y$,
we deduce that for all $j\leq n$
\begin{align}
    f^{(j)}(g(y)) \,\in \,\KK(y,g(y),g'(y),\ldots, g^{(n)}(y)) \quad \text{for all } y\in Y.
\end{align} 
After substituting into \eqref{eq:invproofeqP2} and taking the numerator of the resulting expression, we obtain a differential equation that~$g$ fulfills.
\end{proof}
\begin{example}
Consider $f(x)=\exp(x)$. It is a zero of the differential polynomial\linebreak $p=w'-w$. Starting from $p$, we are going to construct an ODE fulfilled by its inverse function $g(y)=\ln(y)$. Then $p(f(x))=p(f(g(y))) = f'(g(y)) - y$. Now $(f(g(y)))'=y'=1$.  After substitution into $p$, we obtain the rational expression  $(1-y g'(y))/g'(y)$. Indeed, $g=\ln$ is a zero of the expression in the numerator.
\end{example}

More sophisticated examples for ADEs for inverse functions can be computed using our implementation. We conclude this subsection by giving two more closure properties of~$\cA$.
\begin{lemma}\label{lem:antiderder}
Let $f\in \cA$. Then also all antiderivatives $g$ of $f$, i.e., $g\in \cA$ for which $g'=f$, in case of existence, and $f'$ are D-algebraic.
\end{lemma}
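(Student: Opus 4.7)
Both statements follow cleanly from the transcendence-degree characterization in \Cref{prop:finitetr}; in the case of the antiderivative one additionally gets an explicit construction. Throughout I use that $f\in\cA$ means $\tr(\KK(x,f^{(\infty)}),\KK(x))<\infty$.

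\textbf{The case of $f'$.} Since $(f')^{(k)}=f^{(k+1)}$ for every $k\in\NN$, we have the inclusion
\begin{align*}
\KK(x,(f')^{(\infty)}) \,\subseteq\, \KK(x,f^{(\infty)}),
\end{align*}
so the left-hand field has transcendence degree at most $\tr(f)$ over $\KK(x)$, in particular finite. Applying \Cref{prop:finitetr} in the reverse direction yields that $f'$ is D-algebraic. If one wants an actual ADE, set $d=\tr(f)$: the $d+1$ elements $f',f'',\ldots,f^{(d+1)}$ are algebraically dependent over $\KK(x)$, and any such dependency is a non-zero differential polynomial of order at most $d$ vanishing at $f'$. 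Computationally, one can obtain it by eliminating $y$ from the ideal generated by $p,p',p'',\ldots$ after renaming $y^{(j+1)}$ as the $j$-th derivative of a new indeterminate.

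\textbf{The case of an antiderivative.} Let $g$ satisfy $g'=f$. The construction is direct: given a non-zero $p(y,y',\ldots,y^{(n)})\in\KK[x][y^{(\infty)}]$ with $p(f)=0$, define
\begin{align*}
q(y) \,\coloneqq\, p\!\left(y',y'',\ldots,y^{(n+1)}\right)\, \in \,\KK[x][y^{(\infty)}]\setminus\{0\}.
\end{align*}
Evaluating at $y=g$ uses only that $g^{(j+1)}=f^{(j)}$, so $q(g)=p(f)=0$, proving $g\in\cA$ and giving an ADE of order $n+1$. As a conceptual alternative (no construction needed), one can note $\KK(x,g^{(\infty)})=\KK(x,g)(f^{(\infty)})$, and adjoining the single element $g$ to the finite-transcendence-degree field $\KK(x,f^{(\infty)})$ raises the transcendence degree by at most $1$; \Cref{prop:finitetr} then concludes. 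The only mild subtlety to flag is that $g$ is only defined up to an additive constant from $\KK$, but the argument is independent of this choice, since any two antiderivatives have the same higher derivatives.

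\textbf{Main obstacle.} There is no real obstacle—the result is a direct corollary of \Cref{prop:finitetr} together with an easy field-theoretic inclusion in each direction. The only point worth care is ensuring that the ``constructive'' differential polynomial $q$ for the antiderivative is non-zero (which it is, since $p\neq 0$ and the substitution $y^{(j)}\mapsto y^{(j+1)}$ is injective on monomials), and, for $f'$, that the transcendence-degree argument actually yields a non-zero ADE rather than the trivial one (which it does, since a non-trivial algebraic dependency among more elements than the transcendence degree always exists).
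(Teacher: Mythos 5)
Your proof is correct, but it takes a different route from the paper on the $f'$ half of the statement. For the antiderivative, your argument is exactly the paper's: replace $y^{(j)}$ by $y^{(j+1)}$ in $p$ (and your remark that the substitution is injective on monomials, so $q\neq 0$, is a point the paper leaves implicit). For $f'$, however, the paper stays constructive, in line with its stated aim: choosing $p$ irreducible and genuinely involving $y$, it forms the resultant $\res_y(p,p')\in\KK[x][y',y'',\ldots]$, which is non-zero because $p$ and $p'$ are coprime, and after renaming $y^{(j+1)}$ as the $j$-th derivative of a new indeterminate this yields an explicit ADE for $f'$ of order $\ord(p)$. You instead invoke \Cref{prop:finitetr} via the inclusion $\KK(x,(f')^{(\infty)})\subseteq\KK(x,f^{(\infty)})$, which is shorter and makes the closure property transparent, and your supplementary remark that any $\tr(f)+1$ of the derivatives $f',\ldots,f^{(d+1)}$ are algebraically dependent even recovers an order bound $\tr(f)\leq\ord(p)$; the trade-off is that your primary argument is non-constructive and the explicit equation is only sketched (via a generic elimination), whereas the paper's resultant gives a concrete certificate in one step. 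Both arguments are sound; just note that the paper's reduction to the case where $p$ involves $y$ and is irreducible is what guarantees coprimality of $p$ and $p'$, a step your field-theoretic route simply bypasses.
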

\begin{proof}
Let $f\in \cA$ be D-algebraic and $p\in \KK[x][y^{(\infty)}]$ such that $p(f)=0$. We can suppose that\linebreak $p\in \KK[x][y^{(\infty)}]\setminus \KK[x][y',y'',\ldots]$ and moreover that $p$ is irreducible so that $p$ and $p'$ are coprime polynomials. The resultant $q \colon= \res_y(p, p') \in \KK[x][y',y'',\ldots]$ is of the form $Ap+Bp'$ for some $A,B\in \KK[x][y^{(\infty)}]$. Since $p$ and $p'$ are coprime, their resultant is non-zero. The equation for the resultant encodes an ADE which is fulfilled by~$f'$. To find an ADE for antiderivatives of~$f$, one only needs to replace $y^{(j)}$ by $y^{(j+1)}$ in $p$.
\end{proof}

\subsection{Multivariate D-algebraic functions}\label{sec:multiDalg}
In this subsection, we briefly address the multivariate case. In the meantime, a more detailed discussion is contained in Section~$3$ of the follow-up work~\cite{teguia2023arithmetic}. The definition of  D-algebraic functions generalizes to multivariate functions $f(x_1,\ldots,x_k)$. For that, we work over the differential ring $\KK[x_1,\ldots,x_k]$ with the $k$ pairwise commuting derivations $\partial_i=\sfrac{\partial}{\partial x_i}$, \linebreak $i=1,\ldots, k,$ and with the differential polynomial ring
\begin{align}\label{eq:multi}
\KK[x_1,\ldots,x_k][y^{(\infty,\ldots,\infty)}] \, ,
\end{align}
where $y^{(\infty,\ldots,\infty)}$ denotes the set $\{y^{(i_1,\ldots,i_k)} \}_{i_1,\ldots,i_k \in \NN}$ with $y^{(i_1,\ldots,i_k)}=\partial_1^{i_1}\cdots \partial_k^{i_k}(y).$

\begin{definition}
A multivariate function $f(x_1,\ldots,x_k)$ is {\em D-algebraic (over $\KK(x_1,\ldots,x_n)$)} if there exists a differential polynomial $p\in \KK[x_1,\ldots,x_k][y^{(\infty,\ldots,\infty)}]$ such that $p(f)=0.$ 
\end{definition} 
The associated differential equations are {\em partial} algebraic differential equations.

\begin{proposition}
The set of multivariate D-algebraic functions is a field.
\end{proposition}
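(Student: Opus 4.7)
My plan is to imitate the proof of Proposition~\ref{prop:fieldprop} from the univariate case. First I would establish the multivariate analogue of Lemma~\ref{prop:finitetr}: a function $f(x_1,\ldots,x_k)$ is D-algebraic over $\KK(x_1,\ldots,x_k)$ if and only if the transcendence degree $\tr(\KK(\mathbf{x},f^{(\infty,\ldots,\infty)}),\KK(\mathbf{x}))$ is finite, where $\mathbf{x}=(x_1,\ldots,x_k)$. The easy direction is immediate from a dimension count: if the degree is bounded by some $N$, then any $N+1$ elements among $\{f^{(\alpha)}\}_{\alpha\in\NN^k}$ are algebraically dependent over $\KK(\mathbf{x})$, and clearing denominators in the dependence relation yields a non-zero differential polynomial in $\KK[\mathbf{x}][y^{(\infty,\ldots,\infty)}]$ annihilating $f$. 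For the reverse direction, starting from a non-zero $p\in \KK[\mathbf{x}][y^{(\infty,\ldots,\infty)}]$ with $p(f)=0$, I would fix an admissible monomial ordering on $\NN^k$, apply all the $\partial_i$'s to prolong $p$, and use the resulting equations to recursively express every $f^{(\alpha)}$ as algebraic over finitely many lower-order derivatives.

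With this characterization in hand, the closure properties follow verbatim from the univariate argument. For D-algebraic $f$ and $g$, I would use the inclusions
$\KK(\mathbf{x},(f+g)^{(\infty,\ldots,\infty)})\subseteq \KK(\mathbf{x},f^{(\infty,\ldots,\infty)},g^{(\infty,\ldots,\infty)})$
and the analogous inclusion for $fg$; subadditivity of transcendence degree then bounds $\tr(f+g)$ and $\tr(fg)$ by $\tr(f)+\tr(g)<\infty$. For reciprocals, $\tr(f)=\tr(1/f)$ gives D-algebraicity of $1/f$, with an explicit ADE obtained exactly as in Proposition~\ref{prop:fieldprop}: substitute $f\mapsto 1/g$ in an annihilating differential polynomial of $f$ and take the numerator of the resulting expression.

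The hard part will be the reverse direction of the characterization. Unlike the univariate case, where a single ADE of order~$n$ tightly controls every higher derivative of $f$ via one-dimensional induction, a single multivariate ADE may involve only a ``slice'' of the mixed partial derivatives of $f$ and need not a priori constrain directions transversal to that slice. The recursive argument sketched above therefore depends on a careful choice of ordering together with sufficient compatibility among the partial-derivative prolongations of $p$, and in practice one is led to work with a full system of ADEs---morally one per independent variable---before the transcendence degree can be bounded. This is precisely the subtlety discussed in more detail in Section~3 of the follow-up work~\cite{teguia2023arithmetic}, to which the authors refer.
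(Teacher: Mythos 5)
Your overall route coincides with the paper's: its proof of this proposition is literally a one-line instruction to rerun the transcendence-degree argument of \Cref{prop:fieldprop} for the fields $\KK(x_1,\ldots,x_k,f^{(\infty,\ldots,\infty)})$, and your closure argument (the field inclusions, subadditivity of transcendence degree, $\tr(f)=\tr(1/f)$, and the substitution trick for an explicit ADE of the reciprocal) is exactly that part and is fine once the finiteness characterization is available.

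The gap you flag is, however, genuine, and your proposed repair of the reverse direction (fix an admissible ordering, prolong $p$ by all the $\partial_i$, and recursively express every $f^{(\alpha)}$ algebraically over lower-order derivatives) cannot succeed under the definition as literally stated in the paper. A single partial ADE does not imply finite transcendence degree: take $k=2$ and $f(x_1,x_2)=g(x_2)$ with $g$ differentially transcendental over $\KK(x_2)$ (for instance Euler's Gamma function, by H\"older's theorem). Then $f$ is annihilated by $p=y^{(1,0)}$, so it is multivariate D-algebraic in the sense of the paper's definition, but every prolongation $\partial_1^{i}\partial_2^{j}p$ only records the vanishing of the derivatives $f^{(1+i,j)}$ and imposes no relation on the pure derivatives $f^{(0,j)}=g^{(j)}$, which are algebraically independent over $\KK(x_1,x_2)$; hence $\tr\bigl(\KK(x_1,x_2,f^{(\infty,\infty)}),\KK(x_1,x_2)\bigr)=\infty$. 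So the multivariate analogue of \Cref{prop:finitetr} is false verbatim, and since both your argument and the paper's sketch need precisely the implication ``D-algebraic $\Rightarrow$ finite transcendence degree'' (to bound $\tr(f+g)$ and $\tr(fg)$), the proof does not go through for the single-ADE definition. Your instinct that one must work with a system of ADEs controlling all mixed derivatives (equivalently, take finite transcendence degree as the definition, as in the follow-up work \cite{teguia2023arithmetic}) is the correct fix, but be aware that this amounts to strengthening the definition rather than completing the recursion; the paper's own one-line proof silently glosses over the same point.
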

\begin{proof} 
Applying a similar line of argumentation as in the proof of \Cref{prop:fieldprop} to the fields $\KK(x_1,\ldots,x_k,f^{(\infty,\ldots,\infty)})$, and so on,  proves the claim.
\end{proof}

\begin{example} The functions $f(x_1,x_2)\coloneqq 1/(1-\exp(x_1-x_2))$ and $g(x_1,x_2)\coloneqq x_2/(x_1+x_2)$ are both D-algebraic as they satisfy the differential equations 
\begin{align}\label{ex:multvar1}
& y^{(1,0)}(x_1,x_2) + y^{(0,1)}(x_1,x_2) \,=\, 0
\end{align}
and
\begin{align}\label{ex:multvar2}
(x_1+x_2)\,y_2(x_1,x_2) - x_2 \,=\, 0 \, ,
\end{align}
respectively. A generalization of our second method finds that the differential polynomial
\begin{equation}
(x_1+x_2)^2\,y^{(1,0)}(x_1,x_2) + (x_1+x_2)^2\,y^{(0,1)}(x_1,x_2)-x_1+x_2 \label{ex:multivar3}
\end{equation}
vanishes at sums of solutions to~\eqref{ex:multvar1} and~\eqref{ex:multvar2}; in particular, $f+g$ is a zero of~\eqref{ex:multivar3}.
\end{example}
For the rest of the article, we return to the univariate case.

\section{Arithmetic with D-algebraic functions}\label{sec:dalgfunc}
We will now explain how to compute ADEs resulting from operations with differential polynomials. We will describe two methods. The first method is based on jets and builds on iterative elimination with Gr\"{o}bner bases. Our second method computes differential equations whose order is within a fixed range. 

\subsection{Method I}\label{sec:directmth}
Our first method builds on jets. It does not make assumptions a priori on the order of the sought differential equation. We refer our readers who are unfamiliar with these concepts to the appendix, where we recall some basics about algebraic varieties and jet schemes.

For computations, we use the Macaulay2 package {\tt Jets} \cite{JetsSource}.   
Below, we give a detailed explanation on how Method I works. Let $\alpha\in\{+, -, \times, /\} $ denote an arithmetic operation
\begin{align}
\alpha \colon \, \cA\times \cA \longrightarrow \cA, \qquad (f,g)\mapsto \alpha(f,g) \,\coloneqq \,f \alpha g
\end{align}
on the class $\cA$ of D-algebraic functions.
Let $f,g\in\cA$ and $h=\alpha(f,g)$.
We would like to find an ADE for $h$ constructed from ADEs $p$ for $f$ and $q$ for $g$. Let $p  \in \KK[x][y^{(\infty)}]$, $q \in \KK[x][z^{(\infty)}]$ be differential polynomials of order $n$ and $m$, respectively, such that $p(f)=0$ and $q(g)=0$. 

We introduce a new differential indeterminate $w$ and denote by $R_{\alpha}\in\KK[x][y^{(\infty)},z^{(\infty)},w^{(\infty)}]$ the numerator of $w-\alpha(y,z) \in \KK[x](y^{(\infty)}, z^{(\infty)}, w^{(\infty)})$. It encodes a relation among the differential indeterminates $y,z,w$ and their derivatives. For $\alpha = /$, the numerator of\linebreak $w-y/z=(wz-y)/z$ is the differential polynomial $R_{/}=wz-y\in \KK[x][y^{(\infty)},z^{(\infty)},w^{(\infty)}].$ For $\alpha \in \{+,-,\times\}$, $R_{\alpha}$ is simply $w-\alpha(y,z)$. We consider the differential ideal 
\begin{align}\label{eq:idI}
I_{\alpha} \,\coloneqq \,\langle p,\,q,\,R_{\alpha}\rangle^{(\infty)} \,\subset \, \KK[x][y^{(\infty)},z^{(\infty)},w^{(\infty)}] \, .
\end{align} 
For $j\in \NN$, define $I_{\alpha}^{(\leq j)}$ as follows:
\begin{align}\label{eq:idealIj}
\begin{split}
I_{\alpha}^{(\leq j)} \,= \, \langle p,p',\ldots,p^{(j)},q,q',\ldots,q^{(j)},R_{\alpha},R_{\alpha}',\ldots,R_{\alpha}^{(j)} \rangle \, \subset\, \KK[x][y^{(\leq n+j)}, z^{(\leq m+j)}, w^{(\leq j)}].
\end{split}
\end{align} 
Clearly, this yields the ascending chain of ideals $I_{\alpha}^{(\leq 0)} \subseteq I_{\alpha}^{(\leq 1)}  \subseteq \cdots \subseteq I_{\alpha}$. 

The isomorphism~\eqref{eq:isojets} induces the isomorphism 
\begin{align}\label{eq:isotrunc]}
\begin{split}
&\KK[x]\{y_{\ell \leq n + j}, z_{\ell \leq m + j}, w_{\ell \leq j} \}] / \Tilde{I}_j \ \stackrel{\cong}{\longrightarrow} \ \KK[x] [y^{(\leq n+j)}, z^{(\leq m+j)}, w^{(\leq j)}]/ I_{\alpha}^{(\leq j)} \, ,\\& \qquad \qquad \quad x\mapsto x , \quad y_{\ell} \mapsto \frac{y^{(\ell)}}{\ell!}, \quad  z_{\ell} \mapsto \frac{z^{(\ell)}}{\ell!}, \quad w_{\ell} \mapsto \frac{w^{(\ell)}}{\ell!}
\end{split}
\end{align}
for the truncated ideal. The ideal $\Tilde{I}_j$ is obtained as follows. We first consider the power series $p(y(t)),$ $ q(z(t)),$ $ R_\alpha(w(t))$ in $t$, arising from the $j$-jets of $V(p,q,R_{\alpha})$ as explained in the appendix, where we moreover exploit the differential dependencies of the variables $y,y',y''$, and so on.  Whenever $x$ occurs in one of the power series $p(y(t)), q(z(t)), R_\alpha(w(t))$, we replace $x$ by $x+t$. The ideal $\Tilde{I}_j$ then is the ideal generated by the coefficients of $1,t,\ldots,t^j$ in the resulting series. To construct the ideal $I_\alpha^{(\leq j)}$ in Macaulay2, we compute the ideal {\tt jets(j,ideal(p,q,R_alpha))} and adapt the coefficients of the defining polynomials using the isomorphism~\eqref{eq:isojets}. In Macaulay2, for the autonomous case (i.e., \mbox{$p\in \KK[y^{(\infty)}]$}, \mbox{$q\in \KK[z^{(\infty)}]$}), one starts from the ring whose variables are called 
{\tt y\_0..y\_ord(p)}, {\tt z\_0..z\_ord(q)}, and {\tt w\_0}. All the computations in Macaulay2 are done over the base field~$\QQ$. In the case of non-autonomous differential polynomials, i.e., those containing also the independent variable~$x$, we encode $x$ in Macaulay2 with the following trick. We introduce one more indeterminate ``$x_0 = x$'' and add the element $x_1 - 1$ to the ideal~$I$. 
\begin{proposition}\label{prop:directmeth}
There exists $N\in \NN$ such that
\begin{align}\label{eq:elim}
I_{\alpha}^{(\leq N)} \cap \KK[x][w^{(\infty)}] \,\neq \, \langle 0 \rangle \, .
\end{align}
\end{proposition}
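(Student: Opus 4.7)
The plan is to exploit the closure of the class $\cA$ of D-algebraic functions under arithmetic operations (\Cref{prop:fieldprop}) at the level of a universal quotient of $R\coloneqq\KK[x][y^{(\infty)},z^{(\infty)},w^{(\infty)}]$: this produces a non-trivial algebraic relation among $w,w',\ldots,w^{(N)}$ for $N\ge\ord(p)+\ord(q)$, which then descends to an element of $I_\alpha^{(\le M)}$ for some $M$ via the ascending-union decomposition $I_\alpha=\bigcup_{M\ge 0}I_\alpha^{(\le M)}$.

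Concretely, let $\mathfrak P$ be a minimal prime differential ideal of $R$ containing $I_\alpha$, and set $L\coloneqq\mathrm{Frac}(R/\mathfrak P)$. In the resulting differential extension of $\KK(x)$, the residues $\bar y,\bar z,\bar w$ satisfy $p(\bar y)=q(\bar z)=R_\alpha(\bar y,\bar z,\bar w)=0$. By \Cref{prop:finitetr}, $\tr(\bar y)\le n$ and $\tr(\bar z)\le m$; and since $R_\alpha$ forces $\bar w$ to be a rational expression in $\bar y,\bar z$ (in the division case, one passes to a minimal prime with $\bar z\ne 0$, which is available under the tacit compatibility assumption on domains), we obtain $\tr(\bar w)\le n+m$. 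Hence, for any $N\ge n+m$, the $N+1$ elements $\bar w,\bar w',\ldots,\bar w^{(N)}$ are algebraically dependent over $\KK(x)$, and clearing denominators produces a non-zero $P_\mathfrak P\in\KK[x][w^{(\le N)}]$ with $P_\mathfrak P\in\mathfrak P$.

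To upgrade $P_\mathfrak P\in\mathfrak P$ to a polynomial in $I_\alpha$ itself, we invoke the Ritt--Raudenbush basis theorem: $\sqrt{I_\alpha}$ is the intersection of finitely many minimal prime differential ideals $\mathfrak P_1,\ldots,\mathfrak P_k$. Taking the product $P\coloneqq\prod_i P_{\mathfrak P_i}\in\KK[x][w^{(\le N)}]$ (non-zero since each factor is), one has $P\in\bigcap_i\mathfrak P_i=\sqrt{I_\alpha}$, hence $P^\ell\in I_\alpha$ for some $\ell\ge 1$. Because $P^\ell$ has order $\le N$ in $w$ and $I_\alpha=\bigcup_M I_\alpha^{(\le M)}$, there exists $M$ with $P^\ell\in I_\alpha^{(\le M)}\cap\KK[x][w^{(\infty)}]$, proving the proposition.

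The principal obstacle is the third step, the descent from a prime $\mathfrak P$ back to $I_\alpha$; without the finiteness afforded by Ritt--Raudenbush, one cannot assemble a single polynomial from the per-prime data. A more hands-on alternative avoids prime ideals entirely by working in the Noetherian truncated quotient $R_N/I_\alpha^{(\le N)}$ with $R_N\coloneqq\KK[x][y^{(\le n+N)},z^{(\le m+N)},w^{(\le N)}]$: after localizing at the leading coefficients of $p,q$ and (in the division case) at $z$, each of the finitely many minimal primes of $R_N/I_\alpha^{(\le N)}$ sees only $n+m+1$ transcendence degrees thanks to the triangular elimination structure of the generators $p^{(k)},q^{(k)},R_\alpha^{(k)}$, which express $y^{(n+k)}$, $z^{(m+k)}$, and $w^{(k)}$ in terms of the ``free'' variables $x,y,\ldots,y^{(n-1)},z,\ldots,z^{(m-1)}$; classical elimination theory then yields the claim for $N\ge n+m$.
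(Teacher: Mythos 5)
Your proposal follows essentially the same route as the paper's proof: decompose $\sqrt{I_\alpha}$ into finitely many prime differential components, produce in each a non-zero differential polynomial in $w$ alone via D-algebraicity/finite transcendence degree of the residues of $y,z,w$, multiply these and take a power to land in $I_\alpha$, and then descend to some truncation $I_\alpha^{(\leq N)}$. The differences are minor -- you invoke Ritt--Raudenbush explicitly where the paper tacitly assumes finitely many prime components, you add order bookkeeping ($\tr(\bar y)\leq n$, etc.) that \Cref{prop:finitetr} does not literally provide but which is not needed for the existence statement, and you replace the paper's Noetherian ascending-chain step by the direct observation $I_\alpha=\bigcup_M I_\alpha^{(\leq M)}$ -- and the division-case wrinkle you flag (components with $\bar z=0$) is passed over in the paper's own proof in exactly the same way.
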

\begin{proof} 
Let $P_1, \ldots, P_k $ be the prime components of $I_\alpha$, i.e, $\sqrt{I_\alpha} = \cap_i P_i$. For every\linebreak $1 \leq i \leq k$, $p, q \in P_i$, which means that the images of $y$ and $z$ in $\KK[x][y^{(\infty)}, z^{(\infty)}, w^{(\infty)}]/P_i$ are differentially-algebraically dependent over $\KK[x]$. Hence the image of $\alpha(y,z)$ is D-algebraic which implies that there exists a non-zero differential polynomial $Q_i \in \KK[x][w^{(\infty)}]$ which belongs to $P_i$.
Then $\Tilde{Q}\coloneqq Q_1\cdots Q_k$ is contained in $ \sqrt{I_\alpha}$ and hence a power of $\Tilde{Q}$ is contained in $I_\alpha$. Therefore, there exists a non-zero differential polynomial $Q$ which depends only on $w$ and belongs to~$I_\alpha$. Let $\ell=\ord(Q)$ and consider the ascending chain of ideals
\begin{align}
I_{\alpha}^{(\leq 0)} \cap \KK[x][w^{(\leq \ell)}] \,\subseteq \, I_{\alpha}^{(\leq 1)} \cap \KK[x][w^{(\leq \ell)}] \, \subseteq \,  \cdots \, .
\end{align}
By Noetherianity of $\KK[x][w^{(\leq \ell)}]$, this chain stabilizes and hence there exists $N$ such that 
$Q$ is contained in $I_{\alpha}^{(\leq N)} \cap \KK[x][w^{(\leq \ell)}].$
\end{proof}
Note that elements of $I_{\alpha}^{(\leq N)}\cap \KK[x][w^{(\infty)}]$ have order at most $N$. Moreover, there might be several differential polynomials of minimal degree among those of minimal order.
To arrive at a non-trivial elimination ideal, taking $N$ to be $\ord(p)+\ord(q)$ is in general not sufficient, as the following example demonstrates.
\begin{example}
For $p=y\,y''-(y')^2$ and $q=z^2+(z')^4$, the elimination ideal $I_N$  from~\eqref{eq:elim} for $\alpha=+$ is trivial for $N=\ord(p)+\ord(q)=3$. Yet, there might be a non-zero differential polynomial of order~$3$ in $I_k$ for some $k\geq 4$.
\end{example}

Thus, in order to compute a differential equation for $h=\alpha(f,g)$, the first method iterates the elimination computation 
\begin{align}\label{eq:intersectId}
I_{\alpha}^{(\leq j)} \cap \KK[x][w^{(\leq j)}] \, ,  \qquad j=1,2,3,\ldots
\end{align}
until the intersection is non-trivial. From the output, namely the Gr\"{o}bner basis of the intersection ideal, we pick a differential polynomial of lowest degree among those of lowest order.
The general strategy consists in defining a differential polynomial $R_{\alpha}$ that encodes the operation $\alpha$ that we wish to perform. This can be generalized to arithmetic operations with several D-algebraic functions. 

\smallskip

We now present the construction of an algebraic differential equation for  $\alpha=\circ,$ i.e., for the composition of D-algebraic functions. Although one can still define $R_{\circ}$ in this case, we here give a more suitable approach for algebraic manipulations.
Let $p \in \KK[x][y^{(\infty)}]$ such that $p(f)=0$. Then for every function $g$ we also have $p(f(g)) =0$. Let $h = f \circ g$. Then 
\begin{align}
h^{(j)} \in \KK[x][g^{(\infty)}, f(g), f'(g), f''(g),\ldots ]
\end{align}
and it is homogeneous of degree $1$ in the $f^{(i)}(g)$.
If $ h^{(j)}(x) = \sum_{i=1}^j q_i(g(x)) f^{(i)}(g(x))$, then
\begin{align}
h^{(j+1)}(x) \,=\, \sum_{i =1}^j \left(q_i(g(x))\right)' f^{(i)}(g(x)) + q_i(g(x)) g'(x) f^{(i +1)}(g(x)) 
\end{align}
by Leibniz' rule.
This recurrence relation will be used in the construction of the ideal containing the ADE for the composition.
Let now $p \in \KK[x][y^{(\infty)}]$ be of order $m$ and \mbox{$q \in \KK[x][z^{(\infty)}]$} of order $n$ and let $I = \langle p , q\rangle^{(\infty)} \in \KK[x][y^{(\infty)}, z^{(\infty)}]$.
In the case that $p$ involves non-constant coefficients, i.e., coefficients involving~$x$, one moreover substitutes these~$x$ by~$z$ in~$p$ and its derivatives.
For every $j\in \NN$, we define the differential polynomial $S_j\in \KK[x][y^{(\infty)},z^{(\infty)}]$ recursively as follows:
\begin{align}\label{eq:compositionEqs}
\begin{split}
\begin{cases}
S_0 = y, \\
\text{if } S_j = \sum_{i =0}^j q_i(z) y^{(i)},  \text{ then } S_{j+1} = \sum_{i =0}^j (q_i(z))'y^{(i)} + q_i(z) z' y^{(i +1)}.
\end{cases}
\end{split}
\end{align}
We consider the (not necessarily differential) ideal
\begin{align}\label{eq:H}
H  \,\coloneqq \, I + \langle \{ w^{(i)} - S_i (y,z) \,|\, i\in \NN \} \rangle \,\subset \,\KK[x] [y^{(\infty)}, z^{(\infty)} , w^{(\infty)}] \, .
\end{align}
For $j \in \NN$ we define 
\begin{align}\label{eq:Hj}
H_j \, \coloneqq \, I^{(\leq j)} + \langle \{ w^{(i)} - S_i (y,z)\, | \, i \leq j + \min(m,n)\} \rangle  \,\subset \, \KK[x] [y^{(\infty)}, z^{(\infty)} , w^{(\infty)}]\, .
\end{align}
\begin{proposition}
There exists $N \in \NN$ such that
\begin{align}\label{eq:HN}
 H_N \cap \KK[x][w^{(\infty)}] \, \neq \,\langle 0 \rangle \, .
\end{align}
\end{proposition}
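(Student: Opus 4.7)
The plan is to mirror the proof of Proposition~\ref{prop:directmeth}: first I will show that the infinite ideal $H$ already contains some nonzero element of $\KK[x][w^{(\infty)}]$, and then I will use Noetherianity to push this down to a finite truncation $H_N$.

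For the first step, I would pass to the minimal primes $P_1, \ldots, P_k$ of $\sqrt{H}$ and argue inside each quotient domain $A_i \coloneqq \KK[x][y^{(\infty)}, z^{(\infty)}, w^{(\infty)}]/P_i$. Because every derivative $p^{(j)}$ belongs to $I \subseteq H \subseteq P_i$, the images $\bar y^{(\infty)}$ have transcendence degree at most $m = \ord(p)$ over $\KK(x)$; analogously $\bar z^{(\infty)}$ have transcendence degree at most $n = \ord(q)$. The generators $w^{(i)} - S_i(y,z) \in P_i$ then realize the subring $\KK(x)[\bar w^{(\infty)}]$ inside the subring generated by $\bar y^{(\infty)}$ and $\bar z^{(\infty)}$, bounding
\begin{align*}
\mathrm{tr.deg}_{\KK(x)} \KK(x)(\bar w^{(\infty)}) \, \leq \, m + n \, .
\end{align*}
Hence infinitely many $\bar w^{(j)}$ must be algebraically dependent over $\KK(x)$, producing a nonzero $Q_i \in \KK[x][w^{(\infty)}] \cap P_i$. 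Setting $\widetilde Q \coloneqq Q_1 \cdots Q_k \in \sqrt{H}$ and raising to an appropriate power yields a nonzero element of $H \cap \KK[x][w^{(\infty)}]$.

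For the second step, let $\ell$ be the $w$-order of this power $\widetilde Q^s$. The defining formulas for $H$ and $H_j$ make it clear that $\bigcup_{j \in \NN} H_j = H$ (every generator enters at some finite $j$). Intersecting with the Noetherian ring $\KK[x][w^{(\leq \ell)}]$ gives an ascending chain of ideals whose union is $H \cap \KK[x][w^{(\leq \ell)}]$ and which therefore stabilizes at some index $N$. In particular $\widetilde Q^s \in H_N \cap \KK[x][w^{(\infty)}]$, proving the claim.

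The delicate point is the transcendence-degree bound inside each $A_i$: the standard argument that $\bar y^{(m)}, \bar y^{(m+1)}, \ldots$ are successively algebraic over $\KK(x)(\bar y, \ldots, \bar y^{(m-1)})$ can fail if the leading coefficient of $p$ in $y^{(m)}$ happens to lie in $P_i$. I would handle this by replacing $p$ with an irreducible factor present in $P_i$ whose leading coefficient avoids $P_i$, or equivalently by invoking a generic fibre dimension bound for the projection of $V(P_i)$ onto its $y$- and $z$-coordinates; everything else in the argument is formal.
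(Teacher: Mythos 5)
Your proof is correct and follows essentially the same route as the paper's: prime decomposition, a transcendence-degree bound of $\ord(p)+\ord(q)$ forcing an algebraic relation among the images of $w^{(0)},\ldots$ (equivalently of the $S_j$), the product $Q_1\cdots Q_k$ raised to a power to pass from the radical to the ideal itself, and the exhaustion $H=\bigcup_j H_j$ together with Noetherianity of $\KK[x][w^{(\leq \ell)}]$ to reach a finite $N$. The only real difference is that the paper decomposes $\sqrt{I}$ inside $\KK[x][y^{(\infty)},z^{(\infty)}]$, whose minimal primes are differential so the standard bound for zeros of a nonzero differential polynomial applies directly, whereas your minimal primes of the non-differential ideal $H$ inherit the same bound because each contains such a differential minimal prime of $\sqrt{I}$ --- a cleaner patch for the ``delicate point'' you flag than replacing $p$ by an irreducible factor whose leading coefficient avoids $P_i$, which need not exist (e.g.\ on singular components).
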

\begin{proof}
Let $P_1, \ldots, P_k $ be the prime components of $I$, i.e., $\sqrt{I} = \cap_i P_i$.  Suppose that $\ord(p) = n$ and $\ord(q) = m$. Then for every $1 \leq i \leq k$, $\tr(\KK[x][y^{(\infty)}, z^{(\infty)}]/P_i , \KK(x)) \leq m + n$. Therefore, the image of the family $\{ S_0, \ldots , S_{m + n} \}$  in $\KK[x][y^{(\infty)}, z^{(\infty)}]/P_i$ is algebraic, hence it is algebraic in $\KK[x][y^{(\infty)}, z^{(\infty)}]/\sqrt{I}$. Therefore, there exists $Q \in \KK[x][w^{(\infty)}] $ such that $Q \in H$. Let $\ell := \ord(Q)$. Using Noetherianity of $\KK[x][w^{(\leq)}]$ as in \Cref{prop:directmeth}, we conclude that there exists $N$ such that the elimination ideal
$H_N \cap \KK[x][w^{(\infty)}] \neq\langle 0 \rangle$
is non-trivial.
\end{proof}

We give some examples to illustrate how to algorithmically determine the differential equation  for the composition of D-algebraic functions.
\begin{example}
Let $p=y'-y$ and $q = z^2 + 2 z'$.
Taking $N = 1$ we have  $S_0 = y ,$ $S_1 = z' y',$ $S_2 = z'' y' - (z')^2 y'' $. We eliminate the variables $y, y' , y'', z , z' , z''$ from the following system of polynomial equations:
\begin{align}
\begin{split}
\begin{cases}
    w - y \,=\, 0, \\
    w' - z' y' \,=\, 0,\\
    w'' - z'' y' - \left(z'\right)^2 y'' \,=\, 0, \\
    y \,=\, y' \,=\, y'',\\
    z^2 + 2z' \,=\, 0, \\
    2zz' + 2 z'' \,=\, 0.
\end{cases}
\end{split}
\end{align}
We obtain the differential polynomial $ (w')^4 - 2 w (w')^2 w'' + w^2 (w'')^2 + 2 w (w')^3 \in H_1$. For any constant~$b$, the function $f_b(x)\coloneqq \exp(2/(2b+x))$ is a zero of this differential polynomial. These functions are holonomic. In contrast to the given ADE, which is independent of $b$, linear differential operators that annihilate $f_b$ depend on~$b$.
\end{example}
\begin{example}
As a second example, we consider $p=y''+y$ and $q = z' - x z$. For $N = 2$, we eliminate $y,y' , y'', y^{(3)}, z,z', z'', z^{(3)}$ from the system
\begin{align}
\begin{split}
\begin{cases}
    w - y \,=\, 0 ,\\
    w' - z' y' =0,\\
    w'' - z'' y' - \left({z'}\right)^2 y'' \,=\, 0, \\
    w^{(3)} - z^{(3)}y' - 3 z' z'' y'' - \left({z'}\right)^3 y^{(3)} \,=\, 0,\\
    y  + y'' \,=\, 0,\\
    y' + y^{(3)} \,=\, 0,\\
    z' \,=\, x z, \\
    z'' \,=\, z + x z', \\
    z^{(3)} \,=\, 2z' + xz'' .
\end{cases}
\end{split}
\end{align}
We get
$(2 x^4+ 3 x^2 +3) w w' + (x^3+x) ({w'})^2 - 3( x^3+x) w w''  - x^2 w' w'' + x^2 w w^{(3)} \in H_2.$
\end{example}

\subsection{Method II}\label{sec:secmeth}
Our second method defines bounds for the order of the sought differential equations. Let $p\in \FF[y^{(\infty)}]$ and $q\in \FF[z^{(\infty)}]$ be of order $n$ and $m$, respectively. For simplicity, we introduce the following notation.

\begin{definition}
A differential polynomial $p\in\FF[y^{(\infty)}]$ of order $n$ is called {\em linear in its highest order term (l.h.o.)} if, in the expression of $p$, $y^{(n)}$ appears only linearly.
\end{definition}
If $p$ is l.h.o.\  and of order $n,$  $y^{(n)}$ can be written as a rational function $r_p$ of $x,y,\ldots,y^{(n-1)}$. From a differential polynomial which is of higher degree in its highest order term, one arrives at an l.h.o.\  polynomial by differentiating it once, and we will always do so in our applications.

\begin{example}
The differential polynomials $y''-(y')^2+xy+x$ and ${y''}({y'})^2y+2y^2$ are l.h.o., whereas $q=(y')^3+xy+1$ is not l.h.o. But its derivative $q'=3y''(y')^2+y+xy'$ is l.h.o.
\end{example}

When studying zeros of $p,q$, we will  have to disregard some of them in the non-l.h.o.\ case. The following example demonstrates the necessity of doing so.

\begin{example} 
Let $p=(y^{(n+1)})^{d+1}+(y^{(n)})^{d+1}+c$, where $d, n\in \NN,$ $d\geq 1$, and $c\in\KK$. Since $p$ is not l.h.o., we derive $p$ once and get
\begin{align}
(d+1)\, y^{(n+1)}\, \left(y^{(n+2)}\,\left(y^{(n+1)}\right)^{d-1}+\left(y^{(n)}\right)^d\right).
\end{align}
The zeros of $p$ whose $(n+2)$-nd derivative can {\em not} be written as a rational function of $x,y,\ldots,y^{(n+1)}$ are those $f\in \cF$ for which $f^{(n+1)}=0$; e.g., polynomials of degree at most~$n$. 
\end{example}

Thus, in particular, we neglect the special case of polynomial solutions of low degree. For them, computations can be done separately.
Since we work with coefficients in $\KK[x]$---and computationally over $\KK(x)$---an arithmetic operation between a non-polynomial D-algebraic function $f$ and a polynomial $g\in \KK [x]$ can be seen as a unary operation. Using Method~I, one only needs the differential polynomial $p$ with $p(f)=0$ as specified in~\eqref{eq:idI}; $R_{\alpha}$ would encode the rational relation between $g$ and the zeros of $p$, namely $R_{\alpha}=w-\alpha(y,g)$, and~$I_{\alpha}=\langle p, R_{\alpha}\rangle$. 

One may also assume that $p$ and $q$ are irreducible as multivariate polynomials.

We now investigate the dimension of the dynamical model $\mathcal{M}_{\alpha(p,q)}$ of the form~\eqref{eq:dynmodel} derived from $p$ and $q$, such that the second equation in \eqref{eq:dynmodel} encodes the operation $\alpha$  we wish to perform. The differential polynomials we are seeking are the elements of the associated differential ideal $I_{\mathcal{M}_{\alpha},j}$ as in \Cref{prop:minp} for some $j$. The dimension of $\mathcal{M}_{\alpha(p,q)}$ is an upper bound for the order of differential polynomials of minimal order among those that can be derived from $\mathcal{M}_{\alpha(p,q)}$. We will denote this number by $\mathfrak{o}_{\alpha}(p,q)$.

\begin{proposition}\label{thm:boundldf}
Let $\alpha\in\{+, -, \times, /\}$ and $p\in \FF[y^{(\infty)}]$, $q\in \FF[z^{(\infty)}]$ of order $n$ and $m$, respectively. 
If $p$ and $q$ are l.h.o., then $\mathfrak{o}_{\alpha}(p,q) \leq m+n$.
\end{proposition}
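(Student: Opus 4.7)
The plan is to build, from $p$ and $q$, a dynamical model of the form~\eqref{eq:dynmodel} whose dimension equals $m+n$ and whose output is $w=\alpha(y,z)$, and then invoke~\Cref{prop:minp} to conclude that a non-zero differential polynomial in $w$ alone can be extracted at truncation level $m+n$.

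First, I would use the l.h.o.\ hypothesis to rewrite $p$ and $q$ as explicit first-order systems. Since $p$ is l.h.o.\ of order $n$, one can solve for $y^{(n)}$ and obtain a rational function $r_p\in\KK(x)(y,y',\ldots,y^{(n-1)})$ with $y^{(n)}=r_p$; similarly for $q$ one gets $z^{(m)}=r_q$. Introducing new state variables
\begin{align*}
\mathbf{y} \,=\, (y_0,\ldots,y_{n-1},\,z_0,\ldots,z_{m-1})
\end{align*}
with $y_i$ standing for $y^{(i)}$ and $z_j$ for $z^{(j)}$, the chain $y_i'=y_{i+1}$ (for $i<n-1$), $y_{n-1}'=r_p(x,y_0,\ldots,y_{n-1})$, and analogously for the $z_j$, defines a vector $\mathbf{A}\in\KK(x)(\mathbf{y})^{m+n}$. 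This is a state-space system of dimension exactly $m+n$.

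Next, I would attach the output. For $\alpha\in\{+,-,\times\}$ one simply sets $B=\alpha(y_0,z_0)\in\KK(x)[\mathbf{y}]$, which is polynomial and hence causes no additional denominator; for $\alpha=/$ one sets $B=y_0/z_0$, whose denominator merges into the common denominator $Q$ of the whole system via the saturation $\colon Q^\infty$ built into~\eqref{eq:idprop7}. Either way, the resulting model $\mathcal{M}_{\alpha(p,q)}$ has dimension $n+m$ and is of the shape~\eqref{eq:dynmodel}.

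Finally, by~\Cref{prop:minp} applied to $\mathcal{M}_{\alpha(p,q)}$, the elimination ideal $I_{\mathcal{M}}\cap\KK(x)[w^{(\infty)}]$ contains a non-zero polynomial of lowest order that already lies in $I_{\mathcal{M},\,n+m}\subset\KK(x)[\mathbf{y}^{(\leq n+m)},w^{(\leq n+m)}]$. Since polynomials in $I_{\mathcal{M},j}$ involve $w$ only up to derivative order $j$, this forces $\mathfrak{o}_\alpha(p,q)\leq m+n$. The main technical point I would be careful about is the division case: one has to verify that incorporating $y_0/z_0$ as the output through a common denominator $Q$ still yields a prime differential ideal in the sense of~\Cref{prop:prop7}, so that the saturation does not spuriously remove the component corresponding to the generic solutions of $p$ and $q$; equivalently, one should observe that the statement is about \emph{generic} zeros, which is exactly the regime where $r_p$, $r_q$, and (for division) $1/z_0$ are well-defined, matching the genericity caveat already flagged in the remark following~\Cref{prop:minp}.
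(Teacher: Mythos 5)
Your proposal is correct and follows essentially the same route as the paper: it uses the l.h.o.\ hypothesis to solve for $y^{(n)}$ and $z^{(m)}$, assembles the $(m+n)$-dimensional state-space system of type~\eqref{eq:dynmodel} with output $w=\alpha(y,z)$, and concludes via \Cref{prop:minp}. Your extra care about the division case and the genericity caveat is a welcome refinement but does not change the argument.
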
 
\begin{proof} Since $p$ and $q$ are l.h.o., we can write $y^{(n)}$ and $z^{(m)}$ as rational functions 
\begin{align}
y^{(n)} \,=\, r_p \left( x,y,\ldots,y^{(n-1)}\right) \quad \text{and}\quad z^{(m)}\,=\, r_q\left(x,z,\ldots,z^{(m-1)}\right).
\end{align}
Define $m+n$ new differential indeterminates
\begin{align}\begin{split}
u_1 \,=\, y, \quad u_2\,=\,y',\   \ldots \  , \quad  u_{n} \,=\, y^{(n-1)},\\
u_{n+1} \,=\,z, \quad u_{n+2}\,=\,z', \ \ldots \ ,  \quad u_{n+m} \,=\, z^{(m-1)}.
\end{split}
\end{align}
This gives rise to the dynamical model
\begin{align}\label{eq:dynsysalpha}
\begin{split} 
u_1' \,=\, u_2, \, \   \ldots \  ,  \ u_{n-1}'\,=\,u_n,  \quad u_{n}' \,=\, r_p(x,u_1, \ \ldots, \ u_{n}), \\
u_{n+1}' \,=\, u_{n+2}, \ \ldots \ , \ u_{n+m-1}' \,=\, u_{n+m}, \quad   u_{n+m}'=r_q(x,u_{n+1},\ldots,u_{n+m}),\\
w \,=\, \alpha(u_1,u_{n+1}),
\end{split}
 \end{align} 
 which we denote by $\mathcal{M}_{\alpha(p,q)}$. System~\eqref{eq:dynsysalpha} is of the same type as~\eqref{eq:dynmodel} over the differential field $\KK(x)$. Therefore, by \Cref{prop:minp}, we deduce that $\mathfrak{o}_{\alpha}(p,q)\leq n+m$.
\end{proof}

\begin{corollary}\label{cor:boudldf}
Let $\alpha\in\{+, -, \times, /\}$ and $p\in \FF[y^{(\infty)}]$, $q\in \FF[z^{(\infty)}]$ be of order $n$ and $m$, respectively. Then  $\mathfrak{o}_{\alpha}(p,q) \leq m+n+2$.
\end{corollary}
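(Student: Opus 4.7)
The plan is to reduce the statement to \Cref{thm:boundldf} by pre-processing $p$ and $q$ so that both become l.h.o., at the cost of raising each order by at most~$1$.

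First, I would establish the key reduction: if $p\in\FF[y^{(\infty)}]$ has order $n$ but is not l.h.o., then its formal derivative $\delta p$ is l.h.o.\ and has order exactly $n+1$. Indeed, the chain rule applied formally yields
\begin{align*}
\delta p \,=\, \frac{\partial p}{\partial y^{(n)}}\, y^{(n+1)} \,+\, (\text{terms of order } \leq n)\, ,
\end{align*}
so $y^{(n+1)}$ appears only linearly. The coefficient $\partial p/\partial y^{(n)}$ is a non-zero differential polynomial of order $\leq n$ because $p$ genuinely contains $y^{(n)}$; hence $\delta p$ has order exactly $n+1$. Moreover, any $f\in\cF$ with $p(f)=0$ satisfies $(\delta p)(f) = \frac{d}{dx}\bigl(p(f)\bigr) = 0$, so every zero of $p$ remains a zero of $\delta p$.

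Second, I would apply the same observation to $q$ and introduce l.h.o.\ replacements: set $\tilde{p} \coloneqq p$ if $p$ is already l.h.o.\ and $\tilde{p} \coloneqq \delta p$ otherwise, and analogously for $\tilde{q}$. Then $\tilde{p}$ is l.h.o.\ of order $\tilde{n}\in\{n,n+1\}$ and $\tilde{q}$ is l.h.o.\ of order $\tilde{m}\in\{m,m+1\}$.

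Third, I would invoke \Cref{thm:boundldf} on the l.h.o.\ pair $(\tilde{p},\tilde{q})$, which produces the dynamical model $\mathcal{M}_{\alpha(\tilde{p},\tilde{q})}$ of dimension $\tilde{n}+\tilde{m}$ and thus gives $\mathfrak{o}_{\alpha}(\tilde{p},\tilde{q}) \leq \tilde{n}+\tilde{m} \leq (n+1)+(m+1) = m+n+2$. Since every pair $(f,g)$ with $p(f)=0$ and $q(g)=0$ also satisfies $\tilde{p}(f)=\tilde{q}(g)=0$, any differential polynomial in $w$ derived from $\mathcal{M}_{\alpha(\tilde{p},\tilde{q})}$ vanishes on $w=\alpha(f,g)$. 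Therefore the bound transfers to $\mathfrak{o}_{\alpha}(p,q)$, yielding the claim. The only non-routine step is verifying that $\delta p$ genuinely attains order $n+1$, which follows immediately from $\partial p/\partial y^{(n)}\neq 0$; everything else is a direct appeal to the l.h.o.\ proposition, so I expect no serious obstacle.
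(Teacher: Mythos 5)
Your proposal is correct and follows essentially the same route as the paper: replace $p$ (and $q$) by its derivative when it is not l.h.o., note that $\delta p = \frac{\partial p}{\partial y^{(n)}}\,y^{(n+1)} + (\text{lower order})$ is l.h.o.\ of order $n+1$, and apply \Cref{thm:boundldf} to the resulting pair to get the bound $m+n+2$. Your added remark that every zero of $p$ remains a zero of $\delta p$ is a useful explicit justification of a step the paper leaves implicit, but the argument is the same.
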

\begin{proof}
If $p$ and $q$ are linear in their highest order terms, then \Cref{thm:boundldf} applies. Suppose that at least one of $p$ and $q$ is not l.h.o. Then the differential polynomial
$\delta(p) = y^{(n+1)} p_1 + p_2$ is of order $n+1$ and linear in its highest order term. If $q$ is of higher degree in its highest order term, we proceed similarly. We conclude by \Cref{thm:boundldf} that $\mathfrak{o}_{\alpha}(p,q)\leq m+n+2$.
\end{proof}

\begin{remark}[Unary arithmetic with Method II]
Let $f\in \cA$ and $p$ be a differential polynomial that vanishes at $f$. In order exclude polynomials of degree at most $\ord(p)$, we assume that $f^{(\ord(p)+1)}\neq 0$. For each element in $\KK(x,f)$, Method~II requires only one input differential polynomial, and yields a differential polynomial of order at most $\ord(p)+1$. In particular, rational  functions in $f$ and polynomials in~$x$ can be treated via unary~operations.  
\end{remark}

Our second method is based on \Cref{cor:boudldf}; the underlying algorithm finds a differential equation of order at most $m+n$, $m+n+1$, or $m+n+2$ satisfied by $\alpha(f,g)$, depending on whether $p$ and $q$ are linear in their highest order terms or not. 

We now investigate the order $\mathfrak{o}_{\circ}$ of the resulting differential polynomials for the operation~$\circ$ of composing two D-algebraic functions.

\begin{proposition}\label{thm:cboundldf} 
Let $p\in \FF[y^{(\infty)}]$ and $q\in \FF[z^{(\infty)}]$ of order $n$ and $m$, respectively.\linebreak If $p$~and~$q$ are l.h.o., then $\mathfrak{o}_{\circ}(p,q) \leq m+n .$
\end{proposition}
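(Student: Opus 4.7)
The plan is to mirror the proof of \Cref{thm:boundldf} by constructing a suitable dynamical model of dimension $m+n$ encoding the composition $f\circ g$, and then invoking \Cref{prop:minp}. Since $p$ is l.h.o., there is a rational function $r_p$ with $y^{(n)}=r_p(x,y,\ldots,y^{(n-1)})$; similarly $z^{(m)}=r_q(x,z,\ldots,z^{(m-1)})$. Substituting $y=f$ and evaluating at the argument $g(x)$ gives the key identity
\begin{align*}
f^{(n)}(g(x)) \,=\, r_p\bigl(g(x),\,f(g(x)),\,f'(g(x)),\,\ldots,\,f^{(n-1)}(g(x))\bigr),
\end{align*}
i.e., $f^{(n)}\circ g$ is a rational function of $g$ and the lower-order $f^{(i)}\circ g$.

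Next I would introduce $m+n$ state variables, separating the contributions of the two functions: set $u_i=g^{(i-1)}$ for $i=1,\ldots,m$ and $v_i=f^{(i-1)}(g)$ for $i=1,\ldots,n$, with output $w=v_1=f(g)$. The $u_i$ satisfy $u_i'=u_{i+1}$ for $i<m$ and $u_m'=r_q(x,u_1,\ldots,u_m)$ directly from the l.h.o.\ expression for $q$. For the $v_i$, the chain rule gives $v_i'=(f^{(i-1)}(g))'=g'\cdot f^{(i)}(g)=u_2\,v_{i+1}$ for $i<n$, and for $i=n$, the identity above yields $v_n'=u_2\cdot r_p(u_1,v_1,\ldots,v_n)$. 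Clearing the common denominator $Q$ of $r_p$ and $r_q$ puts this system into the form~\eqref{eq:dynmodel}, producing a dynamical model $\mathcal{M}_{\circ(p,q)}$ of dimension exactly $m+n$ over $\KK(x)$ whose output is $w=f(g)$.

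Applying \Cref{prop:minp} to $\mathcal{M}_{\circ(p,q)}$ then gives a non-zero differential polynomial in the elimination ideal $I_{\mathcal{M}_{\circ(p,q)}}\cap \KK(x)[w^{(\infty)}]$ already contained in $I_{\mathcal{M}_{\circ(p,q)},\,m+n}$, hence of order at most $m+n$; this will be the sought bound $\mathfrak{o}_{\circ}(p,q)\leq m+n$. The main subtlety I expect lies in verifying that the chosen state variables truly close up under differentiation, i.e., that the chain-rule derivation of $v_i'$ introduces no dependence on quantities outside $\{x,u_1,\ldots,u_m,v_1,\ldots,v_n\}$; once the identity for $f^{(n)}(g)$ is in place, this follows by induction on $i$, but it is the step where the non-autonomous case (when $r_p$ depends on $x$, so that after substitution it depends on $u_1=g$) has to be handled with care.
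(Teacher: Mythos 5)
Your proof is correct, and it follows the paper's overall strategy---realize the composition by a dynamical model of dimension $m+n$ over $\KK(x)$ and invoke \Cref{prop:minp}---but your state-space realization differs from the paper's. The paper takes as states the derivatives $z^{(0)},\ldots,z^{(m-1)}$ of the inner function together with the derivatives $w^{(0)},\ldots,w^{(n-1)}$ of the composition itself; to close that system it must invert the chain-rule relations $w^{(i)}=S_i(y,z)$ from \eqref{eq:compositionEqs} (dividing by powers of $z'$) so as to express $y^{(i)}=f^{(i)}(g)$ through $w^{(\leq i)},z^{(\leq i)}$, substitute into $p$ (with $x\mapsto z$), use the l.h.o.\ property to solve the resulting relation linearly for $w^{(n)}$, and, when $m\leq n$, additionally rewrite $z^{(m)},\ldots,z^{(n)}$ via $r_q$; this yields the model \eqref{eq:dynmodcomp} with output $w=u_{m+1}$. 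You instead keep the quantities $v_i=f^{(i-1)}(g)$ themselves as states, so closure under differentiation is immediate from the chain rule ($v_i'=g'\,v_{i+1}$ and $v_n'=g'\cdot r_p(g,v_1,\ldots,v_n)$), and no inversion of the $S_i$, no division by $z'$, and no case distinction $m\leq n$ is needed, with output $w=v_1$. Both models have dimension $m+n$ and are related (generically, where $g'\neq 0$) by an invertible rational change of state variables, so \Cref{prop:minp} gives the bound $\mathfrak{o}_{\circ}(p,q)\leq m+n$ either way; your realization is arguably the cleaner route to the bound, while the paper's has the feature that its states are exactly the data $z^{(\leq m-1)},w^{(\leq n-1)}$ that its implementation (\Cref{algo:Algo5}) manipulates. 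Two small points to tidy up: when $m=1$ the factor $g'$ is not among your states, so there you should write $g'=r_q(x,u_1)$ rather than $u_2$; and your substitution $x\mapsto u_1$ inside $r_p$ is precisely the replacement of $x$ by $z$ that the paper prescribes before \eqref{eq:compositionEqs}, so your treatment of the non-autonomous case is sound.
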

\begin{proof}
In the previous section, we saw how to obtain a rational expression (free of $x$), say $R\in \KK(z^{(0)},\ldots,z^{(n)})[w^{(0)},\ldots,w^{(n)}]$ in terms of $w^{(0)}, \ldots, w^{(n)}$ and $z^{(0)}, \ldots, z^{(n)}$ by substitution into $p=0$. Since $p$ is l.h.o., $w^{(n)}$ appears linearly in $R$. 
From $R$, we deduce the rational expression $r_p\in \KK(x)(z^{(\leq n)},w^{(\leq n-1)})$ for which $w^{(n)}=r_p(z^{(0)},\ldots,z^{(n)},w^{(0)},\ldots,w^{(n-1)})$. We define $u_1=z^{(0)},$ $\ldots,$ $u_m=z^{(m-1)},$ $u_{m+1}=w^{(0)},$ $\ldots,$ $u_{m+n}=w^{(n-1)}$, and deduce $r_q$ from~$q$ as in the proof of \Cref{thm:boundldf}. Furthermore, in case $m \leq n$, we write the $z^{(j)}$'s, $m~\leq~j~\leq~n$, in terms of $z^{(0)},\ldots,z^{(m-1)}$ using $r_q$, so that $r_p$ depends only on $x,u_1,\ldots,u_{n+m}$. We build the following dynamical model over $\KK(x)$ in the indeterminates $u_1,\ldots,u_{m+n},$ and $w$:
\begin{align}\label{eq:dynmodcomp}
\begin{split}
u_1' \,=\, u_2, \ \ldots \ ,\, u_{m-1}'\,=\, u_m, \  u_m' \,=\, r_q(x,u_1,\ldots,u_m),\\
u_{m+1}' \,=\, u_{m+2}, 
\ \ldots , \, u_{m+n-1}' \,=\, u_{m+n}, \
u_{m+n}'\,=\,r_p(x,u_1,\ldots,u_{m+n}),\\ 
w = u_{m+1}.
\end{split}
\end{align}
We obtained a dynamical model of dimension $m+n$.
By \Cref{prop:minp}, $\mathfrak{o}_{\circ}(p,q)\leq m+n$.
\end{proof}

In a similar way as \Cref{cor:boudldf}, we deduce  
\begin{corollary}\label{cor:cboundldf} 
Let $p\in \FF[y^{(\infty)}]$ and $q\in \FF[z^{(\infty)}]$ be of order $n$ and $m$, respectively. Then $\mathfrak{o}_{\circ}(p,q) \, \leq\,  m+n+2 $. \hfill \qed
\end{corollary}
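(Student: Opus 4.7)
The plan is to mimic the proof of \Cref{cor:boudldf} almost verbatim, reducing the general case to the l.h.o.\ case already handled by \Cref{thm:cboundldf}. First I would split into cases according to whether $p$ and $q$ are l.h.o.\ or not. If both are l.h.o., then \Cref{thm:cboundldf} directly gives $\mathfrak{o}_{\circ}(p,q) \leq m+n \leq m+n+2$, and there is nothing to do.

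If at least one of them fails to be l.h.o., I would replace the offending polynomial(s) by their formal derivative(s). The key observation---already used implicitly in \Cref{cor:boudldf}---is that for any differential polynomial $p$ of order $n$, the derivative $\delta(p)$ is of order $n+1$ and admits the decomposition $\delta(p) = y^{(n+1)} p_1 + p_2$ with $p_1, p_2 \in \FF[y^{(\leq n)}]$, so that $\delta(p)$ is automatically l.h.o.\ of order $n+1$. Moreover, every zero of $p$ is a zero of $\delta(p)$, so any ADE satisfied by $f \circ g$ derived from $\delta(p)$ in place of $p$ is also satisfied by $f \circ g$ when $p(f)=0$. The same remark applies to $q$.

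Thus, in the worst case that both $p$ and $q$ fail to be l.h.o., I replace them by $\delta(p)$ and $\delta(q)$, which are l.h.o.\ of orders $n+1$ and $m+1$, respectively. Applying \Cref{thm:cboundldf} to this new pair yields a differential polynomial in $w$ of order at most $(n+1)+(m+1) = m+n+2$ vanishing at $f \circ g$. If only one of $p,q$ is non-l.h.o., the bound becomes $m+n+1 \leq m+n+2$. In all cases we obtain $\mathfrak{o}_{\circ}(p,q) \leq m+n+2$.

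I do not anticipate any real obstacle: the only subtlety is to verify that the dynamical-model construction in the proof of \Cref{thm:cboundldf} is indeed valid for $\delta(p)$ and $\delta(q)$ in place of $p$ and $q$, which amounts to noting (as in the remark preceding \Cref{thm:boundldf}) that we may have to exclude the degenerate solutions of $p$ for which $f^{(n+1)}$ vanishes identically, i.e., polynomial solutions of degree at most $n$; these can be handled separately and do not affect the generic bound.
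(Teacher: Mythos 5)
Your proposal is correct and matches the paper's intended argument: the paper proves this corollary simply by noting it follows ``in a similar way as \Cref{cor:boudldf},'' i.e., by differentiating any non-l.h.o.\ input once (raising its order by one and making it l.h.o.) and then invoking \Cref{thm:cboundldf}, exactly as you do. Your additional remark about excluding the degenerate solutions with vanishing top derivative is consistent with the genericity caveat the paper attaches to Method~II.
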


\section{Algorithms and applications}\label{sec:algo}
In this section, we summarize the steps of the algorithms described in the previous sections and apply them to examples from the sciences.

\subsection{Algorithms}\label{sec:algos}
We start with the algorithm that returns a differential polynomial of lowest order w.r.t.\  a dynamical model of  type~\eqref{eq:dynmodel}. After that, we present algorithms for arithmetic binary operations and compositions of D-algebraic functions for Methods I and II. As the description of the unary case is straightforward from these, we omit to present it here. For some explanations, see \Cref{rem:polsols}. 
However, all codes are provided on the Mathrepo.

We implemented the first method in {Macaulay2} using jets for truncating differential ideals. Based on the second method, we initiated the {Maple} package {\tt NLDE}\footnote{{\tt NLDE} is an acronym for ``{\bf N}on{\bf l}inear (algebra) and {\bf d}ifferential {\bf e}quations''.} which, at the time of writing this paper, contains five procedures, namely one for deriving differential polynomials from dynamical models, and four for dealing with compositions, inverse functions, and rational functions of D-algebraic functions.

\Cref{algo:Algo1} is based on \Cref{prop:minp}. We implemented it in Maple as the command {\tt SystoMinDiffPoly}.\footnote{The command is to be used as {\tt SystoMinDiffPoly} or {\tt NLDE:-SystoMinDiffPoly}, depending on the configuration of the user's Maple session with the package {\tt NLDE}. } The main idea behind our second method is to model operations between D-algebraic functions in such a way that \Cref{algo:Algo1} can be applied.

\noindent\makebox[\linewidth]{\rule{\textwidth}{0.4pt}}

\vspace*{-4mm}
\begin{Algorithm}[Algorithm behind \Cref{prop:minp}]\label{algo:Algo1}
    \begin{algorithmic} 
    \\ \Require $\mathbf{A}$ and $B$ from \eqref{eq:dynmodel}. 
    \Ensure A differential polynomial of lowest possible order that vanishes at all $B(\mathbf{f}(x))~\in~\cA$.
    \begin{enumerate}
        \item Let $Q$ be the least common multiple of the denominator of $B$ and the denominators of the $A_i$'s. We define the polynomials $a_i$ and $b$ as in \eqref{eq:dynmodel} such that
        \begin{align*}
        A_i \,=\,\frac{a_i}{Q} \quad \text{and} \quad B \,=\, \frac{b}{Q}\quad\text{for}\quad 1\leq i \leq n \, .
        \end{align*}
        \item Denote by $S$ the set of polynomials 
        \begin{eqnarray*}
             S&\coloneqq &\lbrace Q\,\mathbf{y}'-\mathbf{a}(\mathbf{y}),\, Q \, z - b(\mathbf{y})\rbrace\\
               &=& \lbrace Q\,y_1'-a_1(y_1,\ldots,y_n),\, \ldots,\, Q\,y_n'-a_n(y_1,\ldots,y_n),\, Q\,z - b(y_1,\ldots,y_n)\rbrace 
        \end{eqnarray*}
        in the polynomial ring $\KK[x,y_1,\ldots,y_n,y_1',\ldots,y_n',z]$.
        \item Compute the first $n-1$ derivatives of all polynomials in $S$ and add them to $S$.
        \item Compute the $n$-th derivative of $Q\, z - b(y_1,\ldots,y_n)$ and add it to $S$. This now lives in the ring $\KK[x,y_1^{(\leq n)},\ldots,y_n^{(\leq n)},z^{(\leq n)}]$.
        \item Let  $I\coloneqq\langle S \rangle \subset \KK[x,y_1^{(\leq n)},\ldots,y_n^{(\leq n)},z^{(\leq n)}]$ be the ideal generated by the elements of~$S$.
        \item Saturate $I$ by $Q$, i.e., $I \coloneqq I: Q^{\infty}$.
        \item Compute the elimination ideal $I \cap \KK[x][z^{(\leq n)}]$. From the obtained Gr\"{o}bner basis, choose a non-zero polynomial $r$ of lowest degree among those of the lowest order. 
        \item Return $r$.
    \end{enumerate}	
    \end{algorithmic}
    \vspace*{-4mm}
    
\noindent\makebox[\linewidth]{\rule{\textwidth}{0.4pt}}
\end{Algorithm}

The next algorithm is based on \Cref{prop:directmeth} for arithmetic operations  with D-algebraic functions.

\noindent\makebox[\linewidth]{\rule{\textwidth}{0.4pt}}

\vspace*{-4mm}
\begin{Algorithm}[Method I for  $\alpha \in \{+, -, \times, /\}$  (see \Cref{sec:directmth})]
    \label{algo:Algo2}
    \begin{algorithmic} 
    \\ \Require Two differential polynomials $p\in\KK[x][y^{(\infty)}],q\in \KK[x][z^{(\infty)}]$ of order $n$ and $m$, resp., and $\alpha\in\{+, -, \times, /\}$. 
    \Ensure Differential polynomials that vanish at all $\alpha(f,g)$ s.t. $p(f)=q(g)=0$.
    \begin{enumerate}
        \item Let $R_{\alpha}$ be the numerator of $w-\alpha(y,z)$. 
        \item Let $j=0$.
        \item Let $I_\alpha^{(\leq j)} = \langle p, q, R_\alpha \rangle^{(\leq j)} \subset \KK[x][y^{(\leq j + n)},z^{\leq j + m)}, w^{(\leq j)}]$.
        \item While $I_\alpha^{(\leq j)} \cap \KK[x][w^{(\infty)}] = \langle 0\rangle$ do:   $j\coloneqq j+1$
        \item  Return $I_\alpha^{(\leq j)} \cap \KK[x][w^{(\infty)}]$.
    \end{enumerate}	
    \end{algorithmic}
        \vspace*{-4mm}
    
\noindent\makebox[\linewidth]{\rule{\textwidth}{0.4pt}}
\end{Algorithm}

The next algorithm for the composition of D-algebraic functions is based on Method~I.

\noindent\makebox[\linewidth]{\rule{\textwidth}{0.4pt}}

\vspace*{-4mm}
\begin{Algorithm}[Method I for composition (see \Cref{sec:directmth})]
    \label{algo:Algo3}
    \begin{algorithmic} 
  \\  \Require Two differential polynomials $p\in\KK[x][y^{(\infty)}],q\in \KK[x][z^{(\infty)}]$ of order $n$ and $m$, resp.
    \Ensure A differential polynomial that vanishes at all $\circ(f,g)$ for which $p(f)=q(g)=0$.
    \begin{enumerate}
        \item Let $j = 0$.
        \item Let $I^{(\leq j)} = \langle p, q \rangle^{(\leq j)} \subset \KK[x][y^{(\leq n + j)}, z^{(\leq m + j)}]$, with the substitution in~$p$ as explained right before~\eqref{eq:compositionEqs}.
        \item For $ 0 \leq i \leq j + \min(m,n) $ construct $S_i(y,z)$ as explained in \eqref{eq:compositionEqs} and let $H_j$ be the ideal $I^{(\leq j)} + \langle w^{(i)} - S_i(y,z) \, | \, i \leq j + \min(m,n) \rangle$.
        \item While $H_j \cap \KK[x][w^{(\leq j + \min(m,n))}] = \langle 0\rangle $ do: $j:= j + 1.$
        \item Return a nonzero polynomial in $H_j \cap \KK[x][w^{(\leq j + \min(m,n))})]$.
    \end{enumerate}
    \end{algorithmic}
        \vspace*{-4mm}
    
\noindent\makebox[\linewidth]{\rule{\textwidth}{0.4pt}}
\end{Algorithm}

We implemented \Cref{algo:Algo2} and \Cref{algo:Algo3} in Macaulay2. 

We now give the equivalent of Algorithms \ref{algo:Algo2} and \ref{algo:Algo3} for our second method.

\noindent\makebox[\linewidth]{\rule{\textwidth}{0.4pt}}

\vspace*{-4mm}
\begin{Algorithm}[Method II for $\alpha\in\{+, -, \times, /\}$ (see \Cref{sec:secmeth})]\label{algo:Algo4}
\begin{algorithmic} 
    \\  \Require Two irreducible differential polynomials $p\in\KK[x][y^{(\infty)}],$ $q\in \KK[x][z^{(\infty)}]$ of order $n$ and $m$, resp., and $\alpha\in\{+, -, \times, /\}$.
    \Ensure A differential polynomial of order $N\leq m+n+2$ that vanishes at generic $\alpha(f,g)$ for which $p(f)=q(g)=0$ and $f^{(n+1)}, \, g^{(m+1)}\neq 0.$
    \begin{enumerate}
        \item Let $p_1=p$ if $p$ is l.h.o.\ and $p_1=p'$ otherwise, and define $q_1$ similarly for $q$. The differential polynomials $p_1$ and $q_1$ are l.h.o.
        \item Use $p_1$ and $q_1$ to build the dynamical model \eqref{eq:dynsysalpha}, and denote it by $\mathcal{M}_{\alpha(p_1,q_1)}$.
        \item Return the output of \Cref{algo:Algo1} from the input system $\mathcal{M}_{\alpha(p_1,q_1)}$.
    \end{enumerate}	
    \end{algorithmic}
        \vspace*{-4mm}
    
\noindent\makebox[\linewidth]{\rule{\textwidth}{0.4pt}}
\end{Algorithm}

\noindent\makebox[\linewidth]{\rule{\textwidth}{0.4pt}}

\vspace*{-4mm}
\begin{Algorithm}[Method II for composition (see \Cref{sec:secmeth})]\label{algo:Algo5}
\begin{algorithmic}     
\\ \Require Two irreducible differential polynomials $p\in\KK[x][y^{(\infty)}]$ and $q\in \KK[x][z^{(\infty)}]$ of order $n$ and $m$, respectively.
    \Ensure A differential polynomial of order at most $ m+n+2,$ that vanishes at generic $\circ(f,g)$ for which $p(f)=q(g)=0$ and $f^{(n+1)},\, g^{(m+1)}\neq 0.$
    \begin{enumerate}
        \item Let $p_1=p,$ $ n_1=n$ if $p$ is l.h.o.\ and $p_1=p',$ $ n_1=n+1$ otherwise. In the same way, define $q_1$ and $m_1$ starting from~$q$.
        \item Consider the system of equations $w=y,$ $ w'=z'\,y'$, $w''=z''y'+(z')^2\, y'',$ $\ldots,\, w^{(n)} = S_n(y,z)$, where $S_n\in \KK[x][y^{(\infty)},z^{(\infty)}]$ denotes the differential polynomial defined in~\eqref{eq:compositionEqs}. Via this system, express $y^{(k)}$ in terms of $w,\ldots,w^{(k)},z,\ldots,z^{(k)}$, i.e., $y^{(k)}=a_k$ with $a_k \in \KK(w,\ldots,w^{(k)},z,\ldots,z^{(k)}).$
        \item In $p_1$, substitute $y^{(k)}$ by the expression encoded by $a_k$ and substitute $x$ by $z$. This results in a rational function $R\in \KK(w,\ldots,w^{(k)},z,\ldots,z^{(k)})$.
        \item In case $m_1 \leq n_1$, substitute all occurrences of $z^{(j)}$, $m_1\leq j \leq n_1$ in $R$ by their expressions in terms of $z,\ldots, z^{(m_1-1)}$ using the algebraic relation encoded by~$q$.
        \item Use $q_1$ and $R$ to build the dynamical model \eqref{eq:dynmodcomp} and denote it by $\mathcal{M}_{\circ(p,q)}$.
        \item Return the output of \Cref{algo:Algo1} from the input system $\mathcal{M}_{\circ(p,q)}$.
    \end{enumerate}	
    \end{algorithmic}
        \vspace*{-4mm}
    
\noindent\makebox[\linewidth]{\rule{\textwidth}{0.4pt}}
\end{Algorithm}

The correctness of Algorithms \ref{algo:Algo1}, \ref{algo:Algo2}, \ref{algo:Algo3}, \ref{algo:Algo4}, and \ref{algo:Algo5} follows from combining \Cref{prop:minp} with Sections~\ref{sec:directmth} and~\ref{sec:secmeth}.

\medskip

Our Maple package {\tt NLDE} contains the procedures {\tt arithmeticDalg} and {\tt composeDalg} for rational expressions and compositions of D-algebraic functions, respectively. The setting is for differential equations. They take three arguments. The {\tt arithmeticDalg} command takes as first argument a list of differential equations, say ${\tt DE1}(y_1(x)),\ldots,{\tt DEN}(y_N(x))$, the second is the list of their dependent variables $y_1(x),\ldots,y_N(x)$ in the same order as in the first input, and the third argument is an equation of the form $z=r(x,y_1,\ldots,y_N)$, where $r$ is a rational function in $x$ and the $y_i$'s. The output is a differential polynomial in $z$. 
Thus, addition, multiplication, and division are particular cases in {\tt arithmeticDalg}. The syntax for {\tt composeDalg} follows the same pattern but with only two elements in each list, and the third argument is the dependent variable for the output. The package also contains the command {\tt unaryDalg} for rational, unary operations with D-algebraic functions.
\begin{example}
Let $p=(y')^3+y+1$ and $q=(y')^2-y-1$.
Using our second method for $\alpha=+$, we deduce the ODE
\begin{align}\label{eq:eqh}
24\, \left( y''(x)\right)^3 - 36\, \left( y''(x)\right)^2 + 18\, y''(x) - 8\,y^{(3)}(x) - 3 \,=\,0 
\end{align}
of order~$3$ from running
\begin{lstlisting}
> ADE1 := diff(y(x),x)^3+y(x)+1=0:
> ADE2 := diff(z(x),x)^2-z(x)-1=0:
> NLDE:-arithmeticDalg([ADE1,ADE2],[y(x),z(x)],w=y+z)
\end{lstlisting}
in Maple.
\end{example}

\begin{remark}[Comparison of Methods I and II]\label{re:compareM}
We here give some remarks to compare Methods I and II. In some cases, there are subtle differences between the differential ideals considered in Methods I and II. The ideal $I_{\alpha}^{(\leq j)}$ from Method $I$ contains the ideal $I_{\mathcal{M}_{\alpha},j}$ from \Cref{eq:idealynsys} without the saturation carried out. Moreover, if both $p$ and $q$ are l.h.o., $I_{\alpha}^{(\leq j)}$ is contained in the saturated ideal~$I_{\mathcal{M}_{\alpha},j}$. Indeed, the two methods perform differently in some cases. Consider, for instance, the two differential polynomials
\begin{align}\label{Intro_DE}
p \,=\, y''y-\left(y'\right)^2 \quad \text{and} \quad q\,=\, \left(y'\right)^2 + y^2 + 1 \, .
\end{align}
Our second method finds the differential polynomial
\begin{align}\label{eq:solintro}
-y\, {y''} - y\, {y^{(4)}} + \left(y'\right)^2 + 2\, {y'}\, {y^{(3)}} - \left(y''\right)^2 - {y''}\, {y^{(4)}} + \big(y^{(3)}\big)^2
\end{align}
for $\alpha=+$ in less than a second, whereas the computations based on the first method in {Macaulay2} did not terminate even after a whole day. A Maple implementation of the first method enabled us to find a third-order differential polynomial of degree $18$ in about half a minute. In our experiments, Method I in general returned ADEs of lower order and higher degree than Method II in the non-l.h.o.\ case, and both methods seem to perform similarly in the l.h.o.\ case. \Cref{table:comp} collects the outputs of Method~I and II for the operations $\alpha \in \{+,\times,/,\circ\}$ for the following eight differential polynomials:  
\vspace*{-8mm}
\begin{multicols}{2}
\begin{align}
   y'\,- x\,y^2 , \label{cADE11}\\
    x\,y'-x^2+y-1\, ,\label{cADE12}\\
    y'\,y + y''\, ,\label{cADE13}\\
    y^3-y^{(3)} ,\label{cADE14}
    \end{align}
    
    \columnbreak
    
    \begin{align}
    -\left(z'\right)^2+z+x+1 \, ,\label{cADE21}\\
    z\,z'+3\,z'+2\,x^2+2\label{cADE22}\, ,\\
    z' + x\,z''\, ,\label{cADE23}\\
    z'-z^2\, .\label{cADE24}
\end{align} 
\end{multicols}

\begin{table}[ht]
\begin{tabular}{|l|c|c|c|c|}
\hline
\begin{tabular}{c}
\end{tabular}
& \eqref{cADE11} $\alpha$ \eqref{cADE21}
& \eqref{cADE12} $\alpha$ \eqref{cADE22} 
& \eqref{cADE13} $\alpha$ \eqref{cADE23}
& \eqref{cADE14} $\alpha$ \eqref{cADE24} 
\\\hline
$\alpha =+ $   & \begin{tabular}[c]{@{}c@{}}(I, 364.185, 2, 8) \\ (II, 2{,}152.547, 3, 13)\end{tabular} 
                & \begin{tabular}[c]{@{}c@{}}(I, 3{,}000+, -, -)\\ (II, 0.218, 2, 4)\end{tabular}                                             
                & \begin{tabular}[c]{@{}c@{}}(I, 3{,}000+, -, -)\\ (II, 5.688, 4, 6)\end{tabular}                                             
                & \begin{tabular}[c]{@{}c@{}}(I, 3{,}000+, -, -) \\ (II, 30.281, 4, 15)\end{tabular}                                             
                \\\hline
$\alpha =\times $ & \begin{tabular}[c]{@{}c@{}}(I, 3{,}000+, -, -)\\ (II, 345.453, 3, 8)\end{tabular}                                              
                & \begin{tabular}[c]{@{}c@{}}(I, 3{,}000+, -, -) \\ (II, 0.641, 2, 5)\end{tabular}                                       
                & \begin{tabular}[c]{@{}c@{}}(I, 3{,}000+, -, -) \\ (II, 51.343, 4, 7)\end{tabular}                                        
                & \begin{tabular}[c]{@{}c@{}}(I, 3{,}000+, -, -)\\  (II, 1.547, 4, 10)\end{tabular}                                    
                \\\hline
$\alpha = / $ & \begin{tabular}[c]{@{}c@{}}(I, 3{,}000+, -, -)\\ (II, 3{,}000+, -, -)\end{tabular}                                            
                & \begin{tabular}[c]{@{}c@{}}(I, 3{,}000+, -, -)\\ (II, 44.953, 2, 5)\end{tabular}                                             
                & \begin{tabular}[c]{@{}c@{}}(I, 3{,}000+, -, -)\\ (II, 3{,}000+, -, -)\end{tabular}                                             
                & \begin{tabular}[c]{@{}c@{}}(I, 3{,}000+, -, -)\\ (II, 4.453, 4, 11)\end{tabular}                                    
                \\\hline
$\alpha =\circ $ & \begin{tabular}[c]{@{}c@{}}(I, 0.127, 2, 11)\\ (II, 41.985, 3, 18)\end{tabular}                                              
                & \begin{tabular}[c]{@{}c@{}}(I, 0.314, 2, 5)\\ (II, 37.984, 2, 5)\end{tabular}                                             
                & \begin{tabular}[c]{@{}c@{}}(I, 0.117, 3, 3)\\ (II, 1.797, 3, 3)\end{tabular}                                           
                & \begin{tabular}[c]{@{}c@{}}(I, 3{,}000+, -, -)\\ (II, 51.531, 4, 16)\end{tabular}                                      
                \\\hline
\end{tabular}
\caption{Outputs (I/II, $t, \, r,\, d\,$) of Methods I and II for the differential polynomials in \eqref{cADE11}--\eqref{cADE24}. We here display the CPU time~$t$ in seconds as well as the order~$r$ and the degree~$d$ of the computed differential polynomial. In case we did not obtain a result after $3{,}000$ seconds, we stopped the computation. This is indicated as ``$3{,}000+$'' in the table.}  
\label{table:comp}
\end{table}

An advantage of Method I is that it does not require any genericity assumption on the considered solutions. Yet, since this method builds on Gr\"{o}bner bases, the runtime is expected to scale badly when the degree and number of variables increase. For more details about the complexity analysis, we refer to~\cite{joswig2018degree} and the references~therein.
\end{remark}
\begin{remark}[Polynomial solutions]\label{rem:polsols}
We now present how one can deal with polynomial solutions of low degree. Let $p=y''y-(y')^2$ and $q=(y')^2+y^2+1$, as in \Cref{re:compareM}. The only problematic polynomial zero of $p$ is the zero polynomial, and for $q$ the constant functions~$\pm i$. 
Then $f\pm i$, where $f$ is a zero of~$p$, is not a solution of~\eqref{eq:solintro}. This can be seen for $f=\exp$, for instance. We find differential polynomials for these two cases through the unary operations $y\pm i$ over $\QQ(i)$, and we get 
$\pm i\, {y''} + {y}\, {y''} - \left(y'\right)^2 \, .$
Alternatively, one can use any of our two methods with the algebraic equation of $\pm i$ as an ADE of order $0$ as second input.
\end{remark}

\subsection{Applications}\label{sec:mDalgApp}
We here showcase computations for some D-algebraic functions from practical applications, among others from high energy physics and biology, and also prove some functional identities.
\begin{example}[Elliptic functions]\label{ex:Feynman}
Again, let $\wp$ denote the Weierstrass elliptic function. It fulfills 
\begin{align}\label{eq:ADEwp1}
\left(\wp'(x)\right)^2 \,=\, 4 \left(\wp(x)\right)^3 - g_2 \wp(x) -g_3 \, ,
\end{align}
where $g_2$ and $g_3$ are constants depending on the periods of~$\wp$. Now consider the following rational function of the Weierstrass elliptic function:
\begin{align}\label{eq:kappa1}
\kappa (x) \,=\, \frac{-3a_1a_{13}a_{24}\wp(x)+a_1^2s_1(a_2,a_3,a_4)-2a_1s_2(a_2,a_3,a_4)+3s_3(a_2,a_3,a_4)}{-3a_{13}a_{24}\wp(x)+3a_1^2-2a_1s_1(a_2,a_3,a_4)+s_2(a_2,a_3,a_4)} \,.
\end{align}
It arises in study of Feynman integrals on elliptic curves in~\cite[Section~7.1]{ellpolylog}. 
In~\eqref{eq:kappa1}, $s_n$ denotes the elementary symmetric polynomial of degree $n$ in three variables, and $a_{ij}=a_i-a_j$. The following equation was given in \cite[Equation (7.12)]{ellpolylog} as an ADE satisfied by~$\kappa$:
\begin{align}\label{eq:ADEkappa1}
    \left(\frac{\sqrt{\left(a_1-a_3\right)\,\left(a_2-a_4\right)}}{2}\, \kappa'(x)\right)^2 \,=\, \left(\kappa(x)-a_1\right)\,\left(\kappa(x)-a_2\right)\,\left(\kappa(x)-a_3\right)\,\left(\kappa(x)-a_4\right).
\end{align}
To computationally verify this equation, we aim to express the invariants $g_2$ and $g_3$ in terms of $a_1,a_2,a_3,$ and $a_4$. We use our implementation of Method~II to compute an ADE of the same expanded form of \eqref{eq:ADEkappa1} and write it in the following way:
\begin{align}\label{eq:ADEkappa2}
\left(\frac{\sqrt{\left(a_1-a_3\right)\,\left(a_2-a_4\right)}}{2}\, \kappa'(x)\right)^2 \,=\  C_4\, {\kappa(x)}^4 + C_3\, {\kappa(x)}^3 + C_2\, {\kappa(x)}^2 + C_1\, {\kappa(x)} + C_0 \, ,
\end{align}
where $C_0,\ldots,C_4$ are rational expressions in $a_1,a_2,a_3,a_4,g_2,g_3$. By comparison of the coefficients of powers of $\kappa(x)$ in~\eqref{eq:ADEkappa1} and~\eqref{eq:ADEkappa2}, we construct the system of equations
\begin{align}\label{eq:systemg2g3}
    \begin{cases}
    C_0\left(a_1,a_2,a_3,a_4,g_2,g_3\right) \,=\, a_1\,a_2\,a_3\,a_4 \, ,\\
    C_1\left(a_1,a_2,a_3,a_4,g_2,g_3\right) \,=\, - \left( a_1\,a_2\,a_3 + a_1\,a_2\,a_4 + a_1\,a_3\,a_4 + a_2\,a_3\,a_4\right)  ,\\
    C_2\left(a_1,a_2,a_3,a_4,g_2,g_3\right) \,=\, a_1\,a_2 + a_1\,a_3 + a_1\,a_4 + a_2\,a_3 + a_2\,a_4 + a_3\,a_4 \, ,\\
    C_3\left(a_1,a_2,a_3,a_4,g_2,g_3\right) \,=\, -\left(a_1+a_2+a_3+a_4\right)\, ,\\
    C_4\left(a_1,a_2,a_3,a_4,g_2,g_3\right) \,=\, 1 \, .
\end{cases}
\end{align}
Solving the system for $g_2$ and $g_3$ yields
\begin{small}
\begin{align}\begin{split}
 g_2 \,=\, \frac{4}{{3 \left(a_2-a_4\right)^{2} \left(a_1-a_3\right)^{2}}}\Big(a_1^{2} a_2^{2}-a_1^{2} a_2 a_3-a_1^{2} a_2 a_4+a_1^{2} a_3^{2}-a_1^{2} a_3 a_4+a_1^{2} a_4^{2}-a_1 \,a_2^{2} a_3\hspace*{3.3mm}\\
    \phantom{abcdefghijklmn}-a_1 \,a_2^{2} a_4-a_1 a_2 \,a_3^{2}+6 a_1 a_2 a_3 a_4-a_1 a_2 \,a_4^{2}-a_1 \,a_3^{2} a_4-a_1 a_3 \,a_4^{2}\hspace*{3.3mm}\\
    +a_2^{2} a_3^{2}-a_2^{2} a_3 a_4+a_2^{2} a_4^{2}-a_2 \,a_3^{2} a_4-a_2 a_3 \,a_4^{2}+a_3^{2} a_4^{2} \Big),
\end{split}\end{align}
\end{small}
and
\begin{small}
\begin{align}\begin{split}
 g_3 \,=\, -\frac{4}{27 \left(a_2-a_4\right)^{3} \left(a_1-a_3\right)^{3}}\Big( \left(a_1 a_2+a_1 a_3-2 a_1 a_4-2 a_2 a_3+a_2 a_4+a_3 a_4\right)\hspace*{4.3mm}\\
    \phantom{abcdefghijklmn}\left(2 a_1 a_2-a_1 a_3-a_1 a_4-a_2 a_3-a_2 a_4+2 a_3 a_4\right)\hspace*{4.3mm}\\
    \left(a_1 a_2-2 a_1 a_3+a_1 a_4+a_2 a_3-2 a_2 a_4+a_3 a_4\right)\Big).
\end{split}\end{align}
\end{small}

\noindent In summary, starting from the ADE~\eqref{eq:ADEkappa1} for~$\kappa$, we reconstructed the invariants $g_1$ and $g_2$ of the Weierstrass elliptic function, of which~$\kappa$ is a rational function of.
\end{example}

\begin{example}[Epidemiology]\label{ex:SIR} We here demonstrate how to compute an ADE in practice, starting from a dynamical system.
The epidemic stage of the SIR (Susceptible-Infected-Removed) model is given by the system of differential equations:
\begin{align}
\begin{split}
S' &\,\,=\,\, -\beta\, S\, I - \delta\, S + \mu \, ,\\
I' &\,\,=\,\, \beta\, S\, I - \gamma\, I + \nu \, ,\\
R' &\,\,=\,\, \delta\, S + \gamma\, I \,.
\end{split}
\end{align}
This model describes how a disease can spread within a population.
We refer our readers to~\cite{scharnhorst2012models} for details about the parameters $\delta,\gamma,\mu,\nu$. The derivation used is $\sfrac{\partial}{\partial t}$, where $t$ represents the time. Suppose we want to compute the minimal differential equation for $R$. Then {\tt SystoMinDiffPoly} will take the triple
\begin{align}\label{eq:dervarSIR}
\left[ -\beta\, S\, T - \delta\, S + \mu,\ \beta\, S\, T - \gamma\, T + \nu, \ \delta  \,  S + \gamma\, T \right]
\end{align}
as first argument, where $I$ is replaced by $T$ because the variable $I$ is protected for the imaginary number; as second argument, the function depending on $S, T,$ and $R$ for which we seek a differential equation; the third argument is $[S,T,R]$ which represents the main functions of the system whose derivatives are given in the same order in \eqref{eq:dervarSIR}; and finally, the dependent variable for the sought differential equation, we choose $f(t)$ since $R$ is already used among the variables of the system. So the syntax together with the output~is:
\begin{lstlisting}
> timing,p := CPUTime(NLDE:-SystoMinDiffPoly([-beta*S*T-delta*S + mu, beta*S*T - gamma*T + nu, delta*S+gamma*T],R,[S,T,R],f(t))):timing
\end{lstlisting}
\vspace*{-3mm}
\begin{align*}
0.485
\end{align*}
\vspace*{-10mm}
\begin{lstlisting}
> PDEtools:-difforder(p,t)
\end{lstlisting}
\vspace*{-4mm}
\begin{align*}
3
\end{align*}
The output differential equation of our Maple implementation, here {\tt p}, is of order $3$ and degree~$4$. We here do not display~$p$, as it requires about ten lines. We made them available as supplementary files at \href{https://mathrepo.mis.mpg.de/DAlgebraicFunctions/DAlgebraicFunc-Examples-Maple.html}{\tt https://mathrepo.mis.mpg.de/DAlgebraicFunctions}.
Note, however, that the code above computes the differential polynomial in about half a second. Such equations are important to study certain states of the input system.
\end{example}

\begin{example}[Painlev\'{e} transcendent I]
With our second method, we will compute third-order differential equations for the exponential and the square root of the Painlev\'{e} transcendent of type~I. These equations are relevant for understanding important questions related to Painlev\'{e} transcendents, cf.~\cite[Sections 1 \& 2]{clarkson2019open}. Their solutions are particularly interesting special functions, since they cannot be expressed in terms of elementary functions or well-known special functions. We will use $y'-y$ and $2xy'-y$ as input for $\exp$ and $\sqrt{\cdot}$, respectively.
For the exponential, we~run 
\begin{lstlisting}
> ADE1 := diff(y(x), x) - y(x) = 0:
> ADE2 := diff(z(x),x,x)=6*z(x)^2+x: #the transcendent
> NLDE:-composeDalg([ADE1,ADE2],[y(x),z(x)],w(x))
\end{lstlisting}
to obtain the ADE
{\footnotesize 
\begin{align*}
\hspace*{-11mm} {24 x \left(w'(x)\right)^{2} \left(w \! \left(x \right)\right)^4+\left(w \! \left(x \right)\right)^{6}-2 \left(w \! \left(x \right)\right)^{5} w^{(3)}(x)+6 w''(x)  w'(x)  \left(w \! \left(x \right)\right)^4}
+\big(w^{(3)}(x)\big)^{2} \left(w \! \left(x \right)\right)^4 \\ 
\hspace*{-11mm} -4 \left(w'(x)\right)^3-24 w''(x) \left(w'(x)\right)^{2} \left(w \! \left(x \right)\right)^3
-6 w^{(3)}(x) w''(x)  w'(x)  \left(w \! \left(x \right)\right)^3+24 \left(w'(x)\right)^4 w \! \left(x \right)^{2}\\
\hspace*{-11mm} {+4 w^{(3)}(x) \left(w'(x)\right)^3 \left(w \! \left(x \right)\right)^{2}+9 \left(w''(x)\right)^{2} \left(w'(x)\right)^{2} \left(w \! \left(x \right)\right)^{2}}
-12 w''(x) \left(w'(x)\right)^4 w \! \left(x \right)+4 \left(w'(x)\right)^{6}\,=\, 0 \, .
\hspace*{-11mm}
\end{align*}}

\noindent For the square root, we run
\begin{lstlisting}
> ADE3 := 2*x*diff(y(x), x) - y(x) = 0:
> NLDE:-composeDalg([ADE3,ADE2],[y(x),z(x)],w(x))
\end{lstlisting}
to obtain the ADE
{\footnotesize
\begin{align*}
\hspace*{-9.5mm} -48 x^{2} \left(w'(x)\right)^2 \left(w \! \left(x \right)\right)^3+24 x \left(w \! \left(x \right)\right)^4 w'(x) -2 x \big(w^{(3)}(x)\big)^2 \left(w \! \left(x \right)\right)^3
-4 x w''(x)  w^{(3)}(x)  w'(x)  \left(w \! \left(x \right)\right)^2 \\
\hspace*{-9.5mm} + 8 x \left(w'(x)\right)^3 w^{(3)}(x)  w \! \left(x \right) 
+6 x \left(w'(x)\right)^2 \left(w''(x)\right)^2 w \! \left(x \right)+24 x \left(w'(x)\right)^4 w''(x)
+2 w''(x)  w^{(3)}(x)  \left(w \! \left(x \right)\right)^3 \\
\hspace*{-9.5mm}  -3 \left(w \! \left(x \right)\right)^{5} + 2 \left(w'(x)\right)^2 w^{(3)}(x) \left( w \! \left(x \right)\right)^2 
-2 \left(w''(x)\right)^2 w'(x)  \left(w \! \left(x \right)\right)^2-10 \left(w'(x)\right)^3 w''(x)  w \! \left(x \right)
{-8 \left(w'(x)\right)^{5} = 0 } \hspace*{-9mm}
\end{align*}}

\noindent of order $3$ and degree $5$.
\end{example}

As a further application of our methods, we are now going to prove functional identities.

\begin{example}
\label{ex:duplication}
Consider the two trigonometric identities
\begin{align}
		   \tan(3x) &\,=\, \frac{3\,\tan(x)-\tan^3(x)}{1-3\,\tan^2(x)}\, ,\label{eq:id1}\\
		   \sec(3x) &\,=\, \frac{\sec^3(x)}{4-3\,\sec^2(x)}\label{eq:id2}\, ,
\end{align}
with $x$ taken from appropriate domains. For each of the identities of the form $f=g$ in~\eqref{eq:id1} and~\eqref{eq:id2}, we will show that $f$ and $g$ satisfy the same differential equation. Since all the functions are analytic, proving the functional identities reduces to comparing sufficiently many of their series coefficients; the latter are called ``initial values'' in this setup (see \cite{TeguiaDelta2,vdH19}). 
We start from the following ADE fulfilled by $\tan(x)$:
\begin{equation}\label{eq:tan}
		     t'(x) \,=\, \left(t(x)\right)^2+1 \, .
\end{equation}
For the left-hand side of \eqref{eq:id1}, we compute an ADE for the composition $\tan \, \circ \, \left(x\mapsto 3 x\right)$. An ODE satisfied by $x\mapsto 3 x$ is $y'(x)=3$. We use \texttt{NLDE:-composeDalg}: 
\begin{lstlisting}
> ADE1 := diff(t(x),x)=t(x)^2+1: #ADE fulfilled by tan(x)
> NLDE:-composeDalg([ADE1,diff(y(x),x)=3],[t(x),y(x)],z(x))
\end{lstlisting}
and find that
\begin{equation}\label{eq:id1lhs}
		 -3 \left(z \! \left(x \right)\right)^{2}+\frac{d}{d x}z \! \left(x \right)-3 \,=\, 0\, .
		 \end{equation}
For the right-hand side of \eqref{eq:id1}, we use \texttt{NLDE:-unaryDalg}. 
\begin{lstlisting}
> NLDE:-unaryDalg(ADE1,t(x),z=(3*t-t^3)/(1-3*t^2))
\end{lstlisting}
This returns the same ADE as in \eqref{eq:id1lhs}.
Hence, one concludes that the identity \eqref{eq:id1} holds after checking that the $3$rd-order truncations of the Taylor series of $\tan(3\,x)$ and $(3\,\tan(x)-\tan^3(x))/(1-3\,\tan^2(x))$ are identical. By plugging in an arbitrary formal power series into~\eqref{eq:id1lhs} 
and equating the coefficients, one symbolically verifies that the entire sequence of coefficients of any power series solution to~\eqref{eq:id1lhs} is uniquely determined by $3$ initial values and a recursive formula, cf.~\cite{TeguiaDelta2}. 
	
To prove identity~\eqref{eq:id2}, we exploit \eqref{eq:tan} and the following ADE fulfilled by $\cos(x)$:
\begin{equation}
\left(c'(x)\right)^2+\left(c(x)\right)^2 \,=\, 1 \, .
\end{equation}
Using \texttt{NLDE:-unaryDalg} as
\begin{lstlisting}
> ADE2 := diff(c(x), x)^2 + c(x)^2 = 1:
> ADE3 := NLDE:-unaryDalg(ADE2,c(x),s=1/c)
\end{lstlisting}
we find the following ADE fulfilled by $\sec(x)=1/\cos(x)$:
	 \begin{equation}
		 \operatorname{ADE3} \,\coloneqq \, \left(s \! \left(x \right)\right)^{4}-\left(s \! \left(x \right)\right)^{2}-\left(\frac{d}{d x}s \! \left(x \right)\right)^{2} \,=\, 0\, .
		 \end{equation}
We compute an ADE for the left-hand side of \eqref{eq:id2} with \texttt{NLDE:-composeDalg} via
\begin{lstlisting}
> NLDE:-composeDalg([ADE3,diff(y(x),x)=3],[s(x),y(x)],z(x))
\end{lstlisting}
and get
\begin{equation}\label{eq:lhsid2}
		 -18 \left(z \! \left(x \right)\right)^{3}+9 z \! \left(x \right)+\frac{d^{2}}{d x^{2}}z \! \left(x \right)\,=\, 0\, .
\end{equation}
For the right-hand side  of \eqref{eq:id2}, we use \texttt{NLDE:-unaryDalg} with~\eqref{eq:tan}
\begin{lstlisting}
> ADE4 := NLDE:-unaryDalg(ADE3,s(x),z=s^3/(4-3*s^2))
\end{lstlisting}
to obtain
\begin{equation}
		 \operatorname{ADE4} \,\coloneqq \, 9 \left(z \! \left(x \right)\right)^{4}-9 \left(z \! \left(x \right)\right)^{2}-\left(\frac{d}{d x}z \! \left(x \right)\right)^{2} \,=\, 0 \, .
\end{equation}
We obtain a first-order ADE. We take a derivative of it and factor the result. Running
\begin{lstlisting}
> factor(diff(ADE5,x))
\end{lstlisting}
yields
\begin{equation}
		 -2 \left(\frac{d}{d x}z \! \left(x \right)\right) \left(-18 \left(z \! \left(x \right)\right)^{3}+9 z \! \left(x \right)+\frac{d^{2}}{d x^{2}}z \! \left(x \right)\right) \,=\, 0 \, .
\end{equation}
The left-hand side of \eqref{eq:lhsid2} is one of the factors. As before, the equality of the first three Taylor coefficients of both functions in \eqref{eq:id2} proves that the claimed identity \eqref{eq:id2}~holds.
\end{example}

\begin{example}[Proving the duplication formula of the Weierstrass $\wp$ function] As already seen, Weierstrass' elliptic function is a zero of the differential polynomial
\[p \,\coloneqq\, \left( y' \right)^2-4\,y^3+g_2\,y+g_3 \,.\]
We want to prove that {\em any} non-polynomial zero $\wp(x)$  of $p$ satisfies the following identity:
\begin{equation}\label{eq:dupformwp}
    \wp(2x) \,=\, \frac{1}{4}\left(\frac{\wp''(x)}{\wp'(x)}\right)^2 - 2\,\wp(x) \, .
\end{equation}
For the left-hand side, we use  \Cref{algo:Algo5} for the composition $\wp \circ g$, where $g$ is a zero of the differential polynomial $y'-2$. We obtain the differential polynomial
\[q \,\coloneqq\, y'' - 24\,y^2+2\,g_2 \]
by running the following Maple code:
\begin{lstlisting}
> ADEwp := diff(p(x),x)^2=p(x)^3-g2*p(x)-g3: #ADE for Weierstrass
> NLDE:-composeDalg([ADEwp,diff(y(x),x)=2],[p(x),y(x)],r(x))
\end{lstlisting}
For the right-hand side, let  now $\wp(x)$ be a non-polynomial
zero of~$p$
and use $p$ to express the occurring derivatives of $\wp$ in terms of $\wp$ by observing that
\begin{align*}
    p\left(\wp(x)\right)=0 &\iff \left(\wp'(x)\right)^2 = 4\,\left(\wp(z)\right)^3 - g_2\,\wp(z) - g_3 \, ,\\
    p'\left(\wp(x)\right)=0 &\iff \wp''(x) = 6\,\wp(x) - \frac{g_2}{2} \, .
\end{align*}
Hence, the left-hand side in~\eqref{eq:dupformwp} can be rewritten as 
\begin{equation}\label{eq:rhsdupf}
     \frac{\left(6\,\wp(x) - \frac{g_2}{2}\right)^2}{4\left( 4\,\left(\wp(x)\right)^3 - g_2\,\wp(x) - g_3\right)} - 2\,\wp(x).
\end{equation}
Using our algorithm for unary arithmetic, we find a second-order differential polynomial of degree~$20$, which we denote by~$r$. 
It can be obtained by running the following Maple code:
\begin{lstlisting}
> ADErhs := NLDE:-unaryDalg(diff(ADEwp, x), p(x), 
r = 1/4*(6*p^2 - g2/2)^2/(4*p^3 - g2*p - g3) - 2*p, ordering = lexdeg)
\end{lstlisting}
Finally, we prove the identity in \eqref{eq:dupformwp} by showing that  
\[r \in \langle q\rangle^{(\infty)} . \]
To do so, we show that the differential reduction of $r$ with respect to $q$ is zero. In Maple, this can be verified by simple substitution and differentiation or with one of the packages \texttt{DifferentialAlgebra} and \texttt{DifferentialThomas}. For convenience, we use substitution and differentiation. The first step is to eliminate the second-order term from $r$ by replacing all appearances of the second derivative in $r$ with its expression in terms of $y$ from $q$. With the above Maple codes, this corresponds to the following:
\begin{lstlisting}
> simplify(subs(diff(r(x), x, x) = 
solve(ADEcomp, diff(r(x), x, x)), ADErhs))  
\end{lstlisting}
We obtain the differential polynomial
\begin{align}\begin{split}\label{eq:Tfactor}
-256 \Big(-4 y^{3}+g_2 y+\frac{\left(y'\right)^{2}}{4}+g_3\Big)^{4} \Big(3072 g_2 y^{6}+6912 g_3 y^{5}-624 g_2^{2} y^{4}\quad\\ -1824 g_2 g_3 y^{3}+\left(24 g_2^{3}-432 g_3^{2}\right) y^{2}+120 g_2^{2} g_3 y+g_2^{4}+48 g_2 \,g_3^{2}\Big) \,.
\end{split}\end{align}
Let $t\coloneqq -4 y^{3}+g_2 y+\frac{\left(y'\right)^{2}}{4}+g_3$. Observe that $t^4$ is a factor in \eqref{eq:Tfactor}. To conclude, one verifies that 
$t' = y'/2 \cdot q.$
We mention that the proof can be shortened with the improvement of Method~II from \cite[Section~2.2]{teguia2023arithmetic}, which directly finds $4t$ as the differential polynomial for the right-hand side in \eqref{eq:dupformwp}.
\end{example}

\subsection*{Acknowledgments}
We thank Manuel Kauers, Gleb Pogudin, and Bernd Sturmfels for insightful discussions, and Marc H\"{a}rk\"{o}nen for help with our implementation in Macaulay2. We thank Martijn Hidding for pointing out \Cref{ex:Feynman} to us, and the referees for their helpful comments.
ALS~was partially supported by the Wallenberg~AI, Autonomous Systems and Software Program~(WASP) funded by the Knut and Alice Wallenberg Foundation.

{ 

}

\appendix
\section{Algebraic varieties and jet schemes}\label{app:algvar}
In this appendix, we give a very first glimpse into algebraic varieties and jet schemes. 

\subsection{Algebraic varieties}
Our primary reference for this first part is~\cite{CompAlgGeom}, which puts a focus on computational aspects of algebraic geometry. We refer to 
textbooks such as \cite{FultonCurves,Hartshorne} for diving deeper into the topic. 

\begin{figure}[h]
  \includegraphics[width=4.5cm]{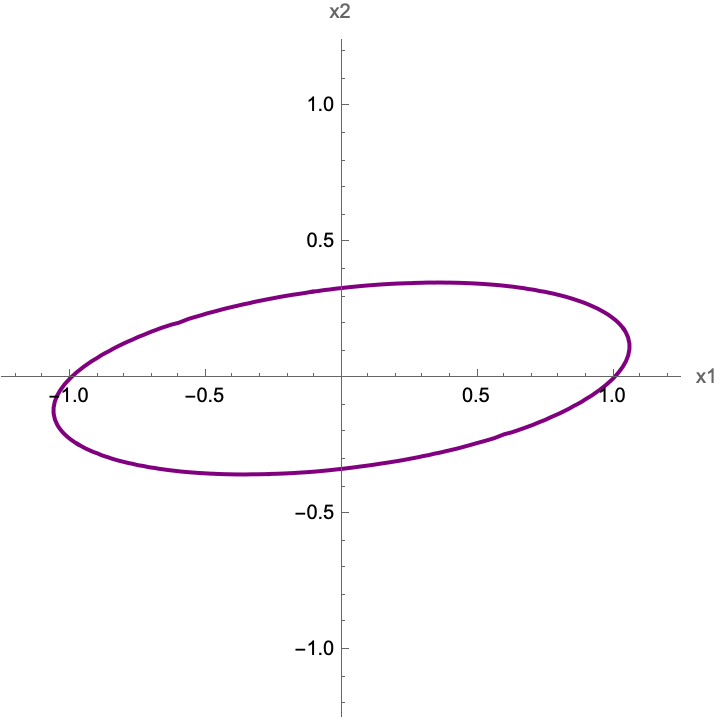} \  \includegraphics[width=4.8cm]{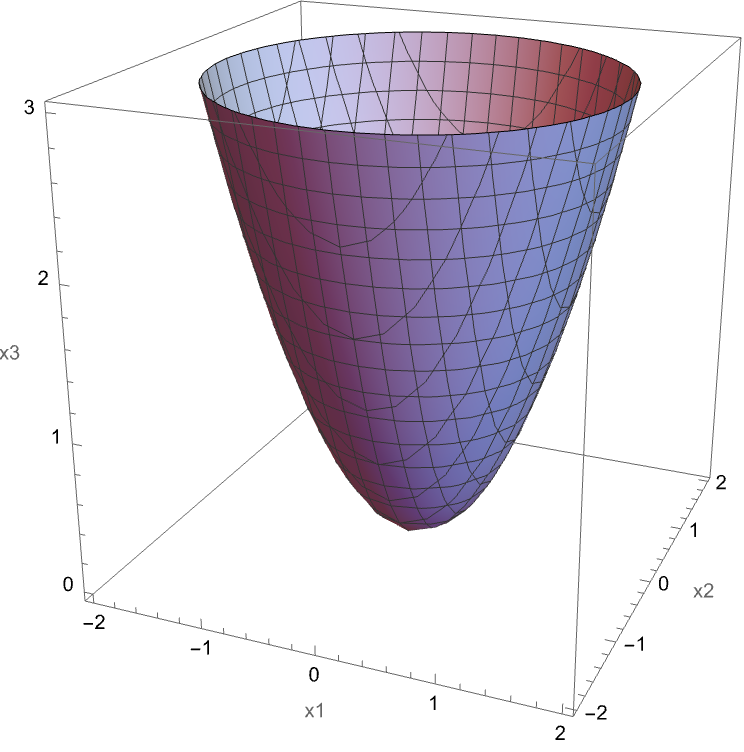}\ \includegraphics[width=5.7cm]{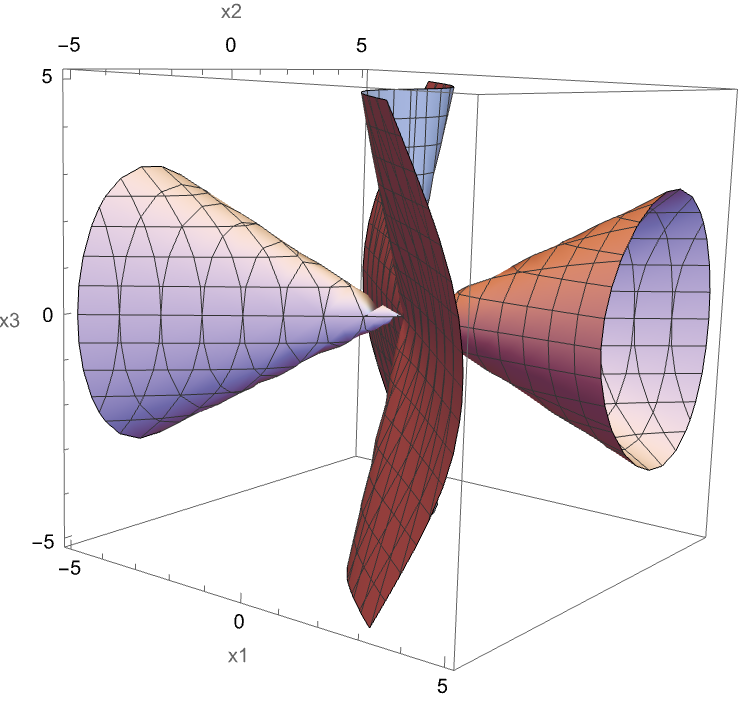}
  \caption{The varieties $V(x_1^2 - 2x_1x_2 + 9x_2^2 - 1)$, $V(x_3-x_1^2-x_2^2)$, and $V(x_1^2x_2 - x_2^3 - x_1x_3^2)$, plotted with Mathematica~\cite{Mathematica}.}
\label{fig:affinevarieties}
\end{figure}

Let $k$ be an arbitrary field, and $f_1,\ldots,f_{\ell}\in k[x_1,\ldots,x_n]$ polynomials in $n$ variables. Their {\em affine variety} $V(f_1,\ldots,f_\ell)\subset k^n$ is the common zero set of $f_1,\ldots,f_\ell$, i.e.,
\begin{align}\label{eq:defvar}
    V\left(f_1,\ldots,f_\ell\right) \, =\,  \left\{ a =(a_1,\ldots,a_n)\in k^n \,|\, f_i(a)=0 \ \text{ for all } \,  i=1,\ldots,\ell \right\} .
\end{align}
Similarly, for $I\subset k[x_1,\ldots,x_n]$ an ideal, its variety is $V(I)=\{ a\in k^n \,| \, f(a)=0 \ \forall f\in I\}$. The solutions of the resulting system of polynomial equations over a $k$-algebra~$K$ constitute the {\em $K$-rational points} of the variety. 
Some examples of affine varieties for the field $k=\RR$ and $\ell=1$ are shown in \Cref{fig:affinevarieties}, namely an ellipse, a paraboloid of revolution, and a cubic surface. If $V=V(f_1,\ldots,f_{\ell})$ and $W=V(g_1,\ldots,g_m)$ are affine varieties, then so are $V\cap W=V(f_1,\ldots,f_{\ell},g_1,\ldots,g_m)$ and $V\cup W=V(f_ig_j \,|\, i=1,\ldots,\ell,\, j=1,\ldots,m)$.

The other way round, given an affine variety $V\subset k^n$, one should ask for the set of {\em all} polynomials that vanish on the variety. They form an ideal $I(V)$ in $k[x_1,\ldots,x_n]$,
\begin{align}
I(V) \, = \, \left\{ f \in k[x_1,\ldots,x_n \,|\, f(a)=0 \,\text{ for all }\, a\in V \right\} , 
\end{align}
the {\em ideal of $V$.} Two affine varieties coincide if and only if their ideals coincide. 
Passing from an affine variety to its ideal is inclusion-reversing in the sense that 
$V\subset W $ if and only if $ I(V)\supset I(W)$. An affine variety is {\em irreducible} if, whenever $V=V_1\cup V_2$ for affine varieties $V_1,V_2$, it follows that $V_1=V$ or $V_2=V$. This condition is often assumed tacitly. A variety is irreducible if and only if its ideal is prime, and, in fact, irreducible varieties in $k^n$ are in one-to-one correspondence with prime ideals in $k[x_1,\ldots,x_n]$. 

The precise relationship between an ideal and the ideal of its variety, can be subtle. For algebraically closed fields, such as the complex numbers, Hilbert's Nullstellensatz relates the ideal of a variety $V(I)$ to the radical ideal $\sqrt{I}=\{ f\in k[x_1,\ldots,x_n] \,|\, \exists m\in \NN\colon \, f^m\in I \}$ of $I$.

\begin{theorem}[The Strong Nullstellensatz]
    Let $k$ be an algebraically closed field and $I\subset k[x_1,\ldots,x_n]$ an ideal. Then
 $I(V(I)) \,=\, \sqrt{I}.$ 
\end{theorem}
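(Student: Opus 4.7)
The plan is to prove the two inclusions $\sqrt{I}\subseteq I(V(I))$ and $I(V(I))\subseteq\sqrt{I}$ separately. The first inclusion is immediate: if $f^m\in I$, then for every point $a\in V(I)$ we have $f(a)^m = f^m(a) = 0$, and since $k$ is a field this forces $f(a)=0$, so $f\in I(V(I))$. This direction works over any field and does not require algebraic closure.

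The main content is the reverse inclusion $I(V(I))\subseteq\sqrt{I}$, for which I would invoke the \emph{Rabinowitsch trick}, reducing to the Weak Nullstellensatz (which must be established first): over an algebraically closed $k$, every proper ideal $J\subsetneq k[x_1,\ldots,x_n]$ has $V(J)\neq\emptyset$. Granting that, let $f\in I(V(I))$ and introduce a fresh variable $t$. Consider the ideal
\begin{align*}
J \,\coloneqq\, I\cdot k[x_1,\ldots,x_n,t] \,+\, \langle 1 - tf\rangle \,\subset\, k[x_1,\ldots,x_n,t].
\end{align*}
I claim $V(J)=\emptyset$ in $k^{n+1}$: any common zero $(a,b)$ would satisfy $a\in V(I)$, hence $f(a)=0$ because $f\in I(V(I))$, but then $1-bf(a)=1\neq 0$. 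By the Weak Nullstellensatz $J$ is the unit ideal, so there exist $g_1,\ldots,g_\ell\in k[x_1,\ldots,x_n,t]$, elements $f_1,\ldots,f_\ell\in I$, and $h\in k[x_1,\ldots,x_n,t]$ with
\begin{align*}
1 \,=\, \sum_{i=1}^{\ell} g_i(x,t)\,f_i(x) \,+\, h(x,t)\,\bigl(1 - tf(x)\bigr).
\end{align*}
Substituting $t = 1/f$ kills the last summand; clearing denominators by multiplying by $f^m$ for $m$ at least the maximal $t$-degree appearing in the $g_i$ yields $f^m\in I$, i.e., $f\in\sqrt{I}$.

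The Weak Nullstellensatz itself is the genuinely nontrivial input and the step I expect to require the most care. The standard approach is to reduce to showing that every maximal ideal $\mathfrak{m}\subset k[x_1,\ldots,x_n]$ (with $k$ algebraically closed) has the form $\langle x_1-a_1,\ldots,x_n-a_n\rangle$ for some $a\in k^n$: then a proper ideal $I$ lies in some maximal $\mathfrak{m}$, and the corresponding point $a$ is in $V(I)$. To identify the maximal ideals, I would use that $K\coloneqq k[x_1,\ldots,x_n]/\mathfrak{m}$ is a field which is finitely generated as a $k$-algebra, and invoke Zariski's lemma (any such $K$ is a finite field extension of $k$); since $k$ is algebraically closed, $K=k$, and the images $a_i$ of $x_i$ yield the point whose maximal ideal is $\mathfrak{m}$. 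Once the Weak Nullstellensatz is in hand, the Rabinowitsch trick above concludes the proof.
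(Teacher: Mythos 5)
Your proof is correct. Note, however, that the paper does not prove this statement at all: the Strong Nullstellensatz appears in the appendix purely as classical background, stated without proof and with a pointer to standard references such as Cox--Little--O'Shea. Your argument is the standard one: the easy inclusion $\sqrt{I}\subseteq I(V(I))$ using that a field has no nilpotents, and the reverse inclusion via the Rabinowitsch trick, reducing to the Weak Nullstellensatz, which you in turn reduce to the classification of maximal ideals via Zariski's lemma. All steps are sound; the substitution $t=1/f$ is legitimate because one may assume $f\neq 0$ (the case $f=0$ being trivial, since $0\in\sqrt{I}$), so the identity can be read in the fraction field and cleared of denominators to give $f^m\in I$. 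The only dependency you leave unproved is Zariski's lemma (equivalently the Weak Nullstellensatz), which you flag explicitly as the nontrivial input; that is exactly where the real work lies in any proof of this theorem, and citing it is consistent with the level of detail the paper itself adopts for this background material.
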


The $k$-algebra $k[V]= k[x_1,\ldots,x_n]/I(V)$ is called the {\em coordinate ring of $V$}, and its elements are the {\em regular functions} on~$V$. A {\em morphism} $V\to W$ between two varieties $V,W$ in~$k^n$ is a map that is polynomial in each of its entries, and such maps correspond precisely to $k$-algebra homomorphisms $k[W]\to k[V] $ between the coordinate~rings.

\begin{example}
Let  $f=x_1^2 - 2x_1x_2 + 9x_2^2 - 1\in \RR[x_1,x_2]$. Its variety $V(f)$ is the ellipse in \Cref{fig:affinevarieties} and $\RR[V(f)]=\RR[x_1,x_2]/\langle f\rangle $ its coordinate ring. The regular functions on~$V(f)$ hence are equivalence classes of polynomials: two polynomials are equivalent if they coincide on the ellipse. For instance $[x_1^2-2x_1x_2+9x_2^2]=[1]$ is a constant function on~$V(f)$.
\end{example}

Grothendieck's school of algebraic geometry undertakes a functorial investigation of {\em schemes}. These are a substantial generalization of varieties. Recall that the {\em prime spectrum} of a ring~$R$ 
is the set of all prime ideals of~$R$. Affine schemes are prime spectra of commutative rings, and schemes are certain spaces---namely ``locally ringed spaces''---that can be covered by affine schemes. In this sense, varieties are particular affine schemes over~$k$, namely prime spectra of rings of the form $k[x_1,\ldots,x_n]/I$, with $I$ an ideal in $k[x_1,\ldots,x_n]$. This abstract approach has several advantages. For instance, schemes are defined in the setup of rings instead of only fields, and they encode finer information than varieties. For instance, schemes take multiplicities of points into account: scheme theory does distinguish between the point $\{x=0\}$ and the ``fat point'' $\{x^p=0\}$. 

\subsection{Jet schemes}\label{app:jets}
We now recall basic algebraic notions about jet schemes of affine schemes. Our presentation closely follows~\cite{GS06}.
 
\begin{definition}
 Let $X$ be an affine scheme over a field $k$. An {\em $m$-jet} of $X$ is a $k$-morphism $ \Spec (k\llbracket t \rrbracket /(t^{m +1})) \to X$. The collection of $m$-jets, denoted by $\mathcal{J}_m(X)$,  is a scheme, and is called the {\em $m$-th jet scheme} of~$X$.
  \end{definition}
 The $k$-rational points of $\mathcal{J}_m(X)$ are precisely the $k\llbracket t\rrbracket /(t^{m +1})$-rational points of~$X$. 
Note also that $\mathcal{J}_0(X) \cong X $, and $\mathcal{J}_1(X)$ coincides with the tangent space of~$X$.  In this sense, jet schemes can be seen as a generalization of the tangent space.

 Now let $I = \langle f_1 , \ldots ,f_r \rangle \subset k[y_1, \ldots, y_n]$ and  $X=\Spec (k[y_1, \ldots, y_n]/ I)$ its affine scheme. An $m$-jet of $X$ then is a $k$-algebra homomorphism 
\begin{align}
    \varphi\colon \, 
k[y_1,\ldots,y_n]/I\longrightarrow k\llbracket t \rrbracket /(t^{m+1}) \,.
\end{align}
Hence, an $m$-jet $\varphi$ of $X$ is determined by the images of the $y_i$'s.
  For every $i =1,\ldots,n$, $\varphi(y_i)$~is the equivalence class of a power series $y_i(t)$ in~$t$, which we denote as
\begin{align}
y_i(t) \, = \, y_{i, 0} + y_{i,1} t + y_{i, 2}t^2 + \cdots \, . 
\end{align}
For every $s=1,\ldots,r,$ consider
\begin{align}
  f_s((y_1(t), \ldots, y_n(t)) \, = \, f_{s, 0} + f_{s,1} t + f_{s, 2}t^2 + \cdots  \, .
\end{align}
Again, we consider the equivalence class of $f_s$ in $k\llbracket t \rrbracket /(t^{m+1})$.
The coefficient $f_{s,j}$ of $t^j$ is a polynomial in the  $y_{i,\ell}$'s, where $1 \leq i \leq n $ and $\ell \in \NN$. For $m\in \NN$, denote by $\Tilde{I}_m$ the ideal 
\[
 \Tilde{I}_m   \,\coloneqq\, \langle f_{s,j} \,|\, 0 \leq j \leq m ,\,  1 \leq s \leq r \rangle \,\subset\, k [ y_{i, \ell} \,|\, 0 \leq \ell \leq m, \,  1 \leq s \leq r] \, .
\] 
The ideal $\tilde{I}_m$ encodes exactly the condition for $\varphi$ to be well-defined.
The $m$-th jet scheme $\mathcal{J}_m(X)$ of $X$ is the affine scheme determined by~$\tilde{I}_m$.

\begin{example}
    Let $I = \langle y_1 y_2 \rangle$. The $m$-th jet scheme $\mathcal{J}_m(X)$ is determined by the coefficients of $1,t,\ldots,t^m$~in 
    \begin{align*}
    y_1(t) y_2(t) \,=\, \left(y_{1, 0} + y_{1,1} t + y_{1, 2}t^2 + \cdots +y_{1, m} t^m\right)\left( y_{2, 0} + y_{2,1} t + y_{2, 2}t^2 + \cdots +y_{2,m} t^m\right) .
    \end{align*}
    For every $\ell  \leq m$, the coefficient of $t^\ell$ is $\sum_{i = 0}^\ell  y_{1, i} y_{2, \ell - i}$. Hence,
    \begin{align*}
    \Tilde{I}_m \,=\, \left\langle \sum_{i = 0}^\ell  y_{1, i} y_{2, \ell - i} \,|\,  0 \leq \ell \leq m \right\rangle.
    \end{align*}
  Under the identification of the $k$-vector spaces $(k\llbracket t \rrbracket / (t^{m+1}))^2 $ and $k^{2( m + 1)}$, one can show that 
     \begin{align*}
    \mathcal{J}_m(X) &
    \,=\, \bigcup_{  \{ u,v >0 \,|\,u + v \,=\, m +1 \}} \{ (y_{1, u} t^u + \cdots+ y_{1, m} t^m , y_{2, v} t^v + \cdots+ y_{2, m} t^m) \}   
   \\ & \qquad \quad  \cup \,  \left\{\left(y_{1,0}+y_{1,1}t+\cdots + y_{1,m}t^m, 0\right) \right\} \, \cup \, \left\{\left( 0,y_{2,0}+y_{2,1}t+\cdots + y_{2,m}t^m)\right) \right\} \, .
    \end{align*}
This is a decomposition of $\mathcal{J}_m(X)$ into its irreducible components.
\end{example}

We end this appendix by establishing the link to differential algebra.
In fact, the map
\begin{align}\label{eq:isojets}
k \left[\{y_{i,\ell},\, 1 \leq i \leq n, \, 0 \leq \ell \leq m \}\right]/ \Tilde{I}_m \,\stackrel{\cong}{\longrightarrow} \, k \big[y_1^{(\leq m)}, \ldots, y_n^{(\leq m)}\big]/ I^{(
\leq m )} \, , \quad y_{i,\ell} \mapsto \frac{y_i^{(\ell)}}{\ell!} 
\end{align}
is an isomorphism of $k$-algebras.
Hence, the affine scheme $\mathcal{J}_m(X)$ of $m$-jets of $X$ is isomorphic to the affine scheme  $\Spec (k [y_1^{(\leq m)},\ldots, y_n^{(\leq m)}] / I^{(\leq m )} )$.

\bigskip \medskip
{\noindent{\bf Authors' addresses:}

\medskip
\small
\noindent Rida Ait El Manssour, MPI MiS$^{\sharp}$ and IRIF$^{\diamond}$ ({\em current})
\hfill {\tt manssour@irif.fr}

\noindent Anna-Laura Sattelberger, MPI MiS$^{\sharp}$ and KTH$^{\flat}$ \hfill {\tt anna-laura.sattelberger@mis.mpg.de}

\noindent Bertrand Teguia Tabuguia, MPI MiS$^{\sharp}$ \\
\hspace*{2mm} and University of Oxford$^{\circ}$ {\em (current)}   \hfill {\tt bertrand.teguia@cs.ox.ac.uk}

\medskip

\noindent $^{\sharp}$ Max Planck Institute for Mathematics in the Sciences, Inselstra{\ss}e 22, 04103~Leipzig, Germany

\noindent $^{\diamond}$ \hspace*{-1.65mm}  CNRS, IRIF, Université Paris Cité, 8 Pl. Aurélie Nemours, 75013 Paris, France

\noindent $^{\flat}$ Department of Mathematics, KTH Royal Institute of Technology, 100~44 Stockholm, Sweden

\noindent  $^{\circ}$ \hspace*{-1.65mm} Department of Computer Science, Wolfson Building, Parks Road, Oxford OX1 3QD, UK

}
\normalsize

\end{document}